\documentclass[letterpaper]{amsart}
\usepackage{amsthm}
\usepackage{amssymb,latexsym,graphics,enumerate}
\usepackage[mathscr]{eucal}
\usepackage{amsmath,amsfonts,amsthm,amssymb}
\usepackage{color}
\usepackage{hyperref}
\usepackage[left=1in,right=1in,top=1in,bottom=1in]{geometry}
\usepackage[abbrev,lite,nobysame]{amsrefs}

\numberwithin{equation}{section}

\newcommand{\lan}{\langle}
\newcommand{\ran}{\rangle}
\newcommand{\be}{\begin{eqnarray*}}
\newcommand{\bel}{\begin{eqnarray}}
\newcommand{\ee}{\end{eqnarray*}}
\newcommand{\eel}{\end{eqnarray}}
\newcommand{\ba}{\begin{aligned}}
\newcommand{\ea}{\end{aligned}}
\newcommand{\de}{\Delta}

\newcommand{\pa}{\partial}

\newcommand{\pn}{\phi_{\neq}}
\newcommand{\pz}{\lan \phi\ran}

\newtheorem{theorem}{Theorem}[section]
\newtheorem{lemma}[theorem]{Lemma}

\newtheorem{proposition}[theorem]{Proposition}
\newtheorem{corollary}[theorem]{Corollary}
\theoremstyle{definition}
\newtheorem{assumption}{Assumption}[section]
\theoremstyle{definition}
\newtheorem{example}{Example}[section]
\newtheorem{remark}{Remark}[section]

\newcommand{\gn}{g_{\neq}}
\newcommand{\gz}{\lan g\ran}

\newcommand{\norm}[1]{\left\lVert#1\right\rVert}

\newcommand\N{{\mathbb N}}
\newcommand\R{{\mathbb R}}
\newcommand\T{{\mathbb T}}

\newcommand{\cS}{\mathcal{S}}


\newcommand{\ZZ}{\mathbb{Z}}

\newcommand{\TT}{\mathbb{T}}

\newcommand{\bv}{\mathbf{v}}

\newcommand{\rL}{\mathring{L}}

\newcommand{\op}{\text{op}}

\usepackage{verbatim}
\usepackage{esint}
\usepackage{bm}
\usepackage{enumitem}
\usepackage{amsmath,amsthm}

\usepackage{color}




\def\eps{\varepsilon}
\def\e{{\rm e}}

\def\dd{{\rm d}}
\def\sign{{\rm sign}}
\def\ddt{{\frac{\dd}{\dd t}}}
\def\R {\mathbb{R}}
\def\ZZ {\mathbb{Z}}
\def\Re{{\rm Re}}
\def\Im{{\rm Im}}
\def \l {\langle}
\def \r {\rangle}
\def\T {{\mathbb T}}
\def\de{{\partial}}

\def\N{{\mathbb{N}}}


\begin{document}

\title[Kuramoto-Sivashinsky with Shear]{Global existence for the two-dimensional Kuramoto-Sivashinsky equation with a shear flow}

\author[M. Coti Zelati {\em et Al.}]{Michele Coti Zelati}
\email{m.coti-zelati@imperial.ac.uk}
\author[]{Michele Dolce}
\email{m.dolce@imperial.ac.uk}
\address{Department of Mathematics, Imperial College,
South Kensington Campus,
London SW7 2AZ, UK}

\author[]{Yuanyuan Feng}
\email{yzf58@psu.edu}
\author[]{Anna L. Mazzucato$^\ast$}
\email{alm24@psu.edu}
\address{Department of Mathematics, Penn State University, University Park, PA
16802, USA}
\thanks{$\ast$ Corresponding author.  On behalf of all authors, the corresponding author states that there is no conflict of interest.
}
\thanks{Data sharing not applicable to this article as no datasets were generated or analyzed during the current study.
}

\begin{abstract}
We consider the Kuramoto-Sivashinsky equation (KSE) on the two-dimensional torus in the presence of advection by a given background shear flow. Under the assumption that the shear has a finite number of critical points and there are linearly growing modes only in the direction of the shear,  we prove global existence of solutions with data in $L^2$,
using a bootstrap argument. The initial data can be taken arbitrarily large.
\end{abstract}

\keywords{Two dimension, Kuramoto-Sivashinsky, mixing, global
existence, mild solutions, enhanced diffusion, diffusion time}

\subjclass[2010]{35K25, 35K58, 76E06, 76F25}

\maketitle

\section{Introduction} \label{s:intro}
In this article we consider the Kuramoto-Sivashinsky equation (KSE) in two-space dimension in the presence of advection by a given background shear flow. The KSE is a well-known model of large-scale instabilities, such as those arising in flame-front propagation (see e.g \cite{HN86} and references therein). Since KSE describes the motion of an interface  in a suitable coordinate system, the physically relevant dimensions are 1D and 2D.

The KSE comes in a scalar, potential form, and a differentiated, vectorial form. We will confine ourselves to the scalar form, since the addition of a linear transport term is meaningful for the potential:
\begin{align} \label{eq:KSE}
\partial_t \phi +\frac{1}{2}|\nabla\phi|^2+\Delta^2\phi+\Delta\phi=0\,.
\end{align}
We solve this equation with periodic boundary conditions on $[0, L_1]\times[0, L_2]$, that is, on a two-dimensional torus, which with slight abuse of notation we denote by $\TT^2$. When $L_1>2\pi$ or $L_2>2\pi$, the symbol of the linear operator $\Delta^2+\Delta$ is negative on a finite set of low frequencies.  Hence there are (linearly) growing modes in the horizontal or vertical direction respectively.

We consider a modified version  of \eqref{eq:KSE}, where the potential $\phi$ is subject to advection by a given steady shear flow, which we write without loss of generality as  the horizontal shear ${\bv}=(u(y),0)$:
\begin{align*} 
 \partial_t \phi+A\,u(y)\partial_x \phi+\frac{1}{2}|\nabla\phi|^2+\Delta^2\phi+\Delta\phi=0,
\end{align*}
where the parameter $A>0$ represents the amplitude of the flow.  The KSE with general advection term has been utilized in models of turbulent premixed-combustion \cite{DK95}. By a change of time, the above equation can be rewritten in an equivalent way as
\begin{align} \label{eq:AKSE}
 \partial_t \phi+u(y)\partial_x \phi+\frac{\nu}{2}|\nabla\phi|^2+\nu \Delta^2\phi+\nu \Delta\phi=0,
\end{align}
 where $\nu=A^{-1}$ and, with slight abuse of notation, we have not relabeled the transformed variables. Because $\nu$ determines the strength of the dissipation, we will refer to $\nu$ as a viscosity coefficient. We will refer to the equation above as AKSE.

The main difficulty in dealing with both \eqref{eq:KSE} and \eqref{eq:AKSE} is the lack of {\em a priori} norm estimates on the solution, which does not allow to bootstrap local existence into global existence via  a standard continuation argument.  The analysis of the Kuramoto-Sivashinsky equations in one space dimension is well developed by now, since in one dimension energy estimates lead to a good control on the $L^2$ norm of the solution \cites{BG06,CEES93,Gru00,GF19,GJO15,GO05,Otto09} . By contrast, there are only a handful of results concerning the well-posedness of the classical KSE \eqref{eq:KSE} in dimension greater than one. Local well-posedness holds in $L^p$ spaces \cites{BS07,IS16}. Global existence is known only under fairly restrictive assumptions, such as for thin domains and for the anisotropically reduced KSE \cites{BKRZ14, KM21, LY20,SellT92}, without growing modes \cites{AM19,FM20}, or with only one growing mode in each direction \cite{AM21}, for small data.

In \cite{FM20}, two of the authors proved global existence for AKSE for large data  and any number of growing modes, when the advecting velocity field induces a sufficiently small dissipation time, e.g. if the flow is mixing, that leads to a global uniform bound on the $L^2$ norm of the solution. In this case, the action of the flow is to move energy from large scales to small scales in both directions, where the dissipation can efficiently damp  the effect of all the growing modes. We prove in this work that the same result, global existence for large data, holds, if the advecting flow is a shear flow with only {\em isolated} critical points and in the absence of growing modes in the direction transverse to the shear, the vertical direction in our set up, which can be achieved by restricting $0<L_2<2\pi$.
The idea of the proof is to exploit the enhanced dissipation arising from the combined action of the hyper-diffusion and the advection to control both the non-linearity as well as the destabilizing effect of the negative Laplacean at large scale.
Intuitively speaking, the shear flow has no influence on purely vertical modes. For instance, the function of $y$ obtained by averaging the solution in the $x$ direction may grow in time. On the other hand, the
mixing along  streamlines of the flow moves energy from large to small scales. Therefore, the growth generated by growing horizontal modes is damped on a sufficiently large time-scale by the dissipation. The non-linearity then couples all the modes.

For the case at hand of a steady shear flow, the transport operator has a large kernel, namely all the functions on the torus that are constant in the horizontal variable. One needs to project out the kernel to take advantage of the action of the flow. There is no enhanced decay of the energy on the kernel component (at a linear level), but the norm can nevertheless be controlled as they satisfy collectively a modified one-dimensional KSE.
A key point is to use the fact that the linear operator
\begin{equation} \label{eq:HtildenuDef}
	H_\nu:= \nu\Delta^2+u(y)\pa_x,
\end{equation}
is dissipation enhancing \cites{CKRZ08,CZDE20,FI19}. More precisely, for the components of the solution orthogonal to the kernel of the transport operator, it generates an exponentially stable semigroup $\e^{-tH_{\nu}}$ with a rate of decay of the $L^2$ norm of order $\lambda_\nu$, where $\nu/\lambda_\nu\to 0$ as $\nu \to 0$. By contrast,  a standard  energy estimate shows that the semigroup is contractive with rate $O(\nu)$. The improved rate in viscosity allows to control both the growing modes as well as the nonlinear terms, provided $\nu$ is small enough compared to the size of the initial data.
Given $g\in L^2(\TT^2)$, we denote 
\begin{equation}
	\gz(y)=\frac{1}{L_1}\int_{\T^1}g(t,x,y) \dd x, \qquad  \gn(x,y)=g(x,y)-\gz(y).
\end{equation} 
By Fubini-Tonelli's Theorem, $\gz$ exists for a.e. $y$.
We observe that $\lan g\ran$ corresponds to the projection of  $g$ onto the kernel of the advection operator $u(y) \pa_x$, while $ g_{\neq}$ corresponds to the projection onto the orthogonal complement in $L^2$. As shown in \cite{BCZ17}, if $u$ has a finite number of 
critical points of order at most $m\geq 2$, namely at most $m-1$ derivatives vanish at the critical points, then $u$ is mixing in the sense that
for some  constant $C>0$ there holds
\begin{align}
\| \e^{-u\de_x t} \gn\|_{H^{-1}}\leq \frac{C}{(1+t)^m}\| \gn\|_{H^{1}},
\end{align} 
for every $t\geq 0$. Thanks to \cite{CZDE20}*{Corollary 2.3}, this translates into the enhanced dissipation estimate
\begin{align}\label{bd:semest}
\| \e^{-H_\nu t} \gn\|_{L^2}\leq 5 \e^{-\lambda_{\nu} t}\| \gn\|_{L^2}, \qquad \lambda_\nu=\eps_0 \nu^{\frac{3m}{3m+2}},
\end{align} 
 for some $\eps_0>0$, independent of $\nu$, and for every $t\geq 0$.
For AKSE, we use \eqref{bd:semest} to show that solutions are global, as stated in the next theorem.

\begin{theorem}
\label{t:main}
Let  $0<L_2<2\pi$  and let  $u:[0,L_2)\to \R$ be a smooth function with a finite number of critical points of order at most $m\geq 2$. Then, given $\phi_0\in L^2(\TT^2)$,   there exists $0<\nu_0<1$ depending on $L_1,\,L_2, u$, and $\|\phi_0\|_{L^2}$ with the following property: for any $0<\nu<\nu_0$, there exists a global-in-time weak solution $\phi$ of \eqref{eq:AKSE} with initial data $\phi_0$ such that $\phi\in L^\infty([0,T),L^2)\cap L^2([0,T),H^2)$ for all $0<T<\infty$.
\end{theorem}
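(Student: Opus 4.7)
The plan is to split $\phi = \lan\phi\ran + \pn$ along the kernel/co-kernel of the transport operator $u(y)\partial_x$ and close an $L^2$ bootstrap by combining a one-dimensional KSE-type energy estimate for the $x$-average with a Duhamel representation based on the enhanced dissipation semigroup bound \eqref{bd:semest} for the oscillating part. A standard mollified approximation scheme produces smooth local solutions to \eqref{eq:AKSE}, and I concentrate on \emph{a priori} estimates that survive passage to the limit.

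Averaging \eqref{eq:AKSE} in $x$ and using $\lan u(y)\partial_x\phi\ran=0$ together with $\lan|\nabla\phi|^2\ran = |\partial_y\lan\phi\ran|^2 + \lan|\nabla\pn|^2\ran$ yields a 1D KSE for $\lan\phi\ran$ sourced by the oscillating part. Differentiating once in $y$ and setting $v := \partial_y\lan\phi\ran$ (automatically $y$-mean-zero) gives
\begin{equation*}
\partial_t v + \nu v\,\partial_y v + \nu\partial_y^4 v + \nu\partial_y^2 v = -\frac{\nu}{2}\partial_y\lan|\nabla\pn|^2\ran.
\end{equation*}
Testing against $v$ the nonlinearity drops by periodicity ($\int v^2\partial_y v\,dy=0$), while the Poincar\'e inequality on $y$-mean-zero functions, strict because $L_2<2\pi$, makes the linear part coercive: one gains $c\nu\|\partial_y^2 v\|_{L^2}^2$ with $c = 1-(L_2/2\pi)^2>0$, and Cauchy--Schwarz absorbs the source against Sobolev norms of $\pn$. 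Subtracting the averaged equation from \eqref{eq:AKSE} yields
\begin{equation*}
\partial_t \pn + H_\nu\pn + \nu\Delta\pn + \frac{\nu}{2}\bigl(|\nabla\phi|^2 - \lan|\nabla\phi|^2\ran\bigr)=0,
\end{equation*}
and, since the inhomogeneity and $\Delta\pn$ lie in the orthogonal complement of the kernel of $u(y)\partial_x$, Duhamel combined with \eqref{bd:semest} gives
\begin{equation*}
\|\pn(t)\|_{L^2}\leq 5\e^{-\lambda_\nu t}\|\pn(0)\|_{L^2} + 5\int_0^t \e^{-\lambda_\nu(t-s)}\nu\bigl(\|\Delta\pn\|_{L^2}+\|\nabla\phi\|_{L^4}^2\bigr)\,ds.
\end{equation*}
A companion $L^2$ energy identity for $\pn$ supplies the integrated $H^2$ control needed to bound the integrand via Ladyzhenskaya-type interpolation in 2D.

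The argument is closed by a bootstrap on a maximal existence interval $[0,T_\ast)$: assume $\|\pn(t)\|_{L^2}^2+\|v(t)\|_{L^2}^2 \leq M$ for some $M = M(L_1,L_2,u,\|\phi_0\|_{L^2})$ together with the matching $L^2_t H^2_x$ bounds, and then strictly improve these estimates when $\nu<\nu_0$. The key mechanism is that every nonlinear contribution, as well as the destabilizing term $\nu\Delta\pn$, carries a prefactor of $\nu$, whereas the enhanced decay rate $\lambda_\nu = \ep_0\nu^{3m/(3m+2)}$ satisfies $\nu/\lambda_\nu \to 0$ as $\nu\to 0$; choosing $\nu_0$ small enough therefore leaves enough room to absorb all such contributions into the enhanced exponential decay for $\pn$ and into the coercive $H^2$-dissipation for $v$. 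The main obstacle will be handling the quadratic nonlinearity $|\nabla\phi|^2$ at the $L^2$ regularity level, which demands $H^1$ bounds available only by interpolation between $L^2$ and the $H^2$-dissipation; the interpolation constants must be arranged to remain compatible with the $\nu$--$\lambda_\nu$ gap, and the $\lan\phi\ran$--$\pn$ cross-terms must be distributed so as not to couple back into either estimate supercritically. Once the bootstrap is closed, so that $T_\ast=+\infty$, standard weak-compactness and Aubin--Lions arguments on the approximation scheme deliver the required weak solution in $L^\infty([0,T);L^2)\cap L^2([0,T);H^2)$ for every finite $T$.
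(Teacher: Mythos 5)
Your overall strategy coincides with the paper's: split $\phi=\pz+\pn$, treat $\psi=\partial_y\pz$ via a coercive 1D energy estimate (using $L_2<2\pi$), represent $\pn$ by Duhamel against $\e^{-tH_\nu}$ with the enhanced rate $\lambda_\nu$, and close a bootstrap using $\nu/\lambda_\nu\to 0$. However, the bootstrap as you have formulated it has a genuine gap: an ansatz of the form $\|\pn(t)\|_{L^2}^2+\|v(t)\|_{L^2}^2\le M$ on $[0,T_\ast)$ ``together with the matching $L^2_tH^2$ bounds'' is not self-improving globally in time. The energy inequality for $\pn$ reads
\begin{equation*}
\ddt\norm{\pn}_{L^2}^2+\nu\norm{\Delta\pn}_{L^2}^2\le C\nu\norm{\pn}_{L^2}^6+C\nu\norm{\pn}_{L^2}^2\bigl(1+\norm{\psi}_{L^2_y}^2\bigr),
\end{equation*}
so under a merely uniform $L^\infty_tL^2$ hypothesis the dissipation integral obeys only $\nu\int_0^T\norm{\Delta\pn}_{L^2}^2\lesssim \norm{\pn(0)}_{L^2}^2+\nu T(M^3+\dots)$, which grows linearly in $T$ and cannot be recovered as a $T$-independent bound; the Duhamel integral then also fails to stay small. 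The hypothesis must itself encode decay in a translation-invariant form, e.g. $\norm{\pn(t)}_{L^2}\le 8\e^{-\lambda_\nu(t-s)/4}\norm{\pn(s)}_{L^2}$ and $\nu\int_s^t\norm{\Delta\pn}_{L^2}^2\le 4\norm{\pn(s)}_{L^2}^2$ for all $0\le s\le t$, so that every time integral appearing in the energy and Duhamel estimates is of size $O(\nu/\lambda_\nu)$ times powers of $\norm{\pn(s)}_{L^2}$ and is therefore improvable by shrinking $\nu_0$.

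A second, related omission is how the linear decay is transferred to the nonlinear solution. The semigroup bound carries the prefactor $5$ (or $\e^{\pi/2}$), so a single application of Duhamel from time $0$ can never yield a contraction; one must work on windows of length $\tau^\ast=4/\lambda_\nu$, prove (i) that $\norm{\pn}_{L^2}$ grows by at most a fixed factor within each window (an ODE comparison using the energy inequality, valid for $\nu\tau^\ast$ small) and (ii) that it decays by a fixed factor, say $1/\e$, across each full window (since $5\e^{-4}<\e^{-2}$ leaves room for the Duhamel forcing), and then chain the windows. This iteration is the mechanism that produces the exponential decay you invoke, and it is not a routine detail. Finally, note that $v=\partial_y\pz$ controls $\pz$ only up to its $y$-mean $\bar\phi(t)$, which must be tracked separately from $\partial_t\bar\phi=-\tfrac{\nu}{2L_1L_2}\int_{\T^2}|\nabla\phi|^2$ using the integrated dissipation bounds; this is easy but needed to conclude $\phi\in L^\infty_{loc}L^2$.
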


We stress that we can allow {\em any} number of growing modes in the horizontal direction and the initial data can be {\em arbitrarily large} in $L^2$.

The proof is based on a bootstrap argument inspired by \cite{BH17}.  The main steps in this argument are as follows. For any initial data in $L^2$,  there exists a local-in-time mild solution of  \eqref{eq:AKSE}  on some interval of time $[0,t_0)$, which is also a weak solution in $L^\infty([0,t_0),L^2)\cap L^2([0,t_0),H^2)$ and satisfies the energy identity \cite{FM20}. For $t_0$ small enough, we can make the $L^2$ and $H^2$ norms of the projected component less than a certain multiple of the size of the initial data. By using the stability of the semigroup generated by $H_\nu$, one then shows that, for $\nu$ sufficiently small, these norms are in fact half that amount. Hence the solution can be continued for a longer time than $t_0$, which allows to bootstrap existence from local to global for the projected component and then conclude using the time evolution of the kernel component of the solution.

As we shall see in Section \ref{s:global}, the size of $\nu_0$ in Theorem \ref{t:main} depends on the rate at which $\nu/\lambda_\nu$ vanishes
as $\nu\to 0$. Hence, improving the semigroup estimate \eqref{bd:semest} automatically implies a better global existence threshold.
In Section \ref{s:semigroup}, we show that imposing a possibly more restrictive condition on $u$ (see Assumption \ref{a:lowerbddE}),  the semigroup bound can be improved.
 In particular, we consider as a prototypical example the case of
 \begin{equation}
 		\label{def:sinym}
 		u(y)=\sin((2\pi y)/L_2)^m \qquad \text{for } m\in \mathbb{N},
 	\end{equation}
and prove the following result.
\begin{proposition} \label{p:semest}
Let $g\in L^2(\TT^2)$, $0<\nu<1$, and $u(y)$ be given as in \eqref{def:sinym}. There exists $\eps'_0>0$, independent of $\nu$, such that
\begin{equation}\label{bd:semest2}
	\|\e^{-tH_\nu }\gn\|_{L^2}\leq \e^{-\lambda'_{\nu} t+\pi/2} \lVert \gn\rVert_{L^2}, \qquad \lambda'_\nu=\eps'_0 \nu^{\frac{\max\{2,m\}}{\max\{2,m\}+4}},
\end{equation}
for every $t\geq 0$.
\end{proposition}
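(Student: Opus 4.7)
The plan is to use a quantitative Gearhart--Prüss argument after $x$-Fourier decomposition, combined with a weighted resolvent estimate tailored to the critical structure of $u(y) = \sin^m(2\pi y/L_2)$. Since $H_\nu$ commutes with $x$-translations, decomposing $\gn = \sum_{k\neq 0} g_k(y)\,\e^{i\hat k x}$ with $\hat k = 2\pi k/L_1$ reduces the problem to a uniform-in-$k$ bound for the semigroup generated by
\[
L_k := \nu(\partial_y^2 - \hat k^2)^2 + i\hat k\,u(y)
\]
on $L^2(\T_y)$. Each $L_k$ is $m$-accretive with numerical range in the closed right half-plane, and a quantitative form of the Gearhart--Prüss theorem yields the target $\|\e^{-tL_k}\|_{L^2\to L^2}\leq \e^{-\lambda'_\nu t+\pi/2}$ provided one establishes the uniform resolvent estimate $\sup_{k\neq 0,\,\lambda\in\R}\|(L_k-i\lambda)^{-1}\|_{L^2\to L^2}\leq (\lambda'_\nu)^{-1}$.

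To set up the resolvent estimate I would take $(L_k - i\lambda)f = F$ and set $c := \lambda/\hat k$, $\Delta_k := \partial_y^2 - \hat k^2$, so that $L_k - i\lambda = \nu\Delta_k^2 + i\hat k(u-c)$. Pairing with $f$ gives the real-part bound $\nu\|\Delta_k f\|^2 \leq \|F\|\|f\|$, which in particular controls $\|\partial_y f\|\lesssim \nu^{-1/2}\|F\|^{1/2}\|f\|^{1/2}$ since $|\hat k|\geq 2\pi/L_1$. Pairing instead with $(u-c)f$ and taking the imaginary part, integrating by parts on the biharmonic term and using $\|u\|_{W^{2,\infty}}\lesssim 1$, yields the weighted coercivity
\[
|\hat k|\,\|(u-c)f\|_{L^2}^2 \leq \|F\|\,\|(u-c)f\| + C\nu\bigl(\|\Delta_k f\|^2 + \|\Delta_k f\|\|f\|\bigr).
\]

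The heart of the argument is a sharp coercivity that exploits the explicit form of $u=\sin^m(2\pi y/L_2)$. Zeros of $\sin$ produce critical points of $u$ of order $m$, while $\cos=0$ produces non-degenerate critical points; consequently the resonant set $B_\delta(c) := \{y:|u(y)-c|<\delta\}$ satisfies the uniform measure bound $|B_\delta(c)|\leq C\delta^{1/M}$ with $M=\max\{2,m\}$, and this is saturated at the worst critical values. Splitting
\[
\|f\|^2 \leq \delta^{-2}\|(u-c)f\|^2 + |B_\delta(c)|\,\|f\|_{L^\infty_y}^2
\]
and using 1D Sobolev embedding together with the $H^2$-control from the real-part estimate produces
\[
\|f\|^2 \lesssim \delta^{-2}\|(u-c)f\|^2 + \delta^{1/M}\bigl(\nu^{-1/2}\|F\|^{1/2}\|f\|^{3/2} + \|f\|^2\bigr).
\]

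Finally, inserting the weighted coercivity from the previous step, absorbing the $\|f\|^2$-contributions for $\delta$ small, and optimizing $\delta$ to balance the two remaining terms produces $\|f\|\leq C\nu^{-M/(M+4)}\|F\|$ at the scale $\delta\sim \nu^{M/(2M+8)}$, which is precisely the resolvent bound $(\lambda'_\nu)^{-1}\lesssim \nu^{-M/(M+4)}$ and yields \eqref{bd:semest2} via Gearhart--Prüss. The principal obstacle is the coercivity step: the uniform-in-$c$ measure estimate $|B_\delta(c)|\leq C\delta^{1/M}$ must be derived via a careful analysis of the level sets of $\sin^m$, and one has to verify that the scaling dictated by the worst critical order $M=\max\{2,m\}$ is saturated in the optimization — this is what produces the exponent $M/(M+4)$ in the final rate and, in particular, the apparent gap with the purely degenerate case when $m=1$.
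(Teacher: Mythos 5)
Your overall architecture coincides with the paper's: Fourier decomposition in $x$; $m$-accretivity of $H_{\nu,k}=\nu\Delta_k^2+iku(y)$; the quantitative Gearhart--Pr\"uss theorem of Wei, which reduces everything to a lower bound on $\Psi(H_{\nu,k})=\inf\{\|(H_{\nu,k}-i\lambda)g\|:\|g\|=1\}$ (equivalently, your uniform resolvent bound on the imaginary axis); the real-part identity $\nu\|\Delta_k g\|^2\le\|Hg\|\,\|g\|$; an imaginary-part estimate with a multiplier; a splitting into a resonant set (whose measure bound $|B_\delta(c)|\lesssim \delta^{1/M}$ with $M=\max\{2,m\}$ is equivalent to the paper's Assumption on $u$, verified for $\sin^m$ by a case analysis) and its complement, controlled by Agmon's inequality; and an optimization in $\delta$. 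However, there is a genuine quantitative gap in the multiplier step: the inequalities you display do not close at the rate $\nu^{M/(M+4)}$.

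The problem is the choice of $(u-c)\bar f$ as the multiplier. On the non-resonant set you must convert $\|(u-c)f\|^2$ back into $\int_{|u-c|\ge\eta}|f|^2$ at cost $\eta^{-2}$, where $\eta\sim\delta^{M}$ is the level threshold; that is, you pay \emph{two} powers of the degenerate lower bound. The commutator errors $\nu\,{\rm Im}\langle\Delta_k f,\,u''f+2u'\partial_yf\rangle\lesssim \nu\|\Delta_kf\|\|f\|+\nu\|\Delta_kf\|^{3/2}\|f\|^{1/2}$ carry no compensating smallness, so after multiplication by $\eta^{-2}$ they dominate. Concretely, normalizing $\|f\|=1$ and writing $\|F\|=\nu^{\beta}$, $\eta=\nu^{M\gamma}$: the resonant-set term $\eta^{1/M}\|f\|_{L^\infty}^2\gtrsim \nu^{\gamma+(\beta-1)/4}$ forces $\gamma>(1-\beta)/4$, while the commutator term $\eta^{-2}\nu\|\Delta_kf\|^{3/2}\|f\|^{1/2}\lesssim\nu^{1/4+3\beta/4-2M\gamma}$ forces $1/4+3\beta/4>2M\gamma$; together these require $\beta>(2M-1)/(2M+3)$, strictly larger than $M/(M+4)$ for every $M\ge2$ (e.g. $3/7$ versus $1/3$ at $M=2$), and this is using the sharp Agmon bound $\|f\|_{L^\infty}^2\lesssim\|f\|\|\partial_yf\|+\|f\|^2$ rather than the cruder one you wrote. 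Your claimed optimal scale $\delta\sim\nu^{M/(2M+8)}$ is moreover inconsistent with your own complement estimate: at that scale $\delta^{1/M}\nu^{-1/4}\|F\|^{1/4}\sim\nu^{-1/(2(M+4))}\to\infty$. The paper avoids all of this by pairing with $\chi\bar g$, where $\chi$ is a \emph{bounded} smooth approximation of ${\rm sign}(u-\lambda)$ with $\|\chi'\|_{L^\infty}\lesssim\delta^{-1}$ and $\|\chi''\|_{L^\infty}\lesssim\delta^{-2}$ ($\delta$ now a spatial scale): only \emph{one} power of the lower bound $|u-\lambda|\gtrsim(\delta/L_2)^m$ is then paid on the good set, and the commutator errors cost only $\delta^{-1}$, $\delta^{-2}$, so that all four terms balance exactly at $\delta\sim(\nu/|k|)^{1/(m+4)}$ and yield $\Psi(H_{\nu,k})\gtrsim\nu^{m/(m+4)}|k|^{4/(m+4)}$. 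Replacing your multiplier by such a smoothed sign function (and tracking the $|k|$-dependence, which is needed for uniformity over the horizontal modes) would repair the argument.
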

Notice that the role of $m\in \N$ here is precisely that of \eqref{bd:semest}, as $u$ in \eqref{def:sinym} has critical points of order 
at most $\max\{2,m\}$. Hence, a direct comparison between  \eqref{bd:semest} and \eqref{bd:semest2} shows that
\eqref{bd:semest2} has a much better decay rate, and in particular $\nu/\lambda'_\nu\to 0$ faster as $\nu\to0$.

The derivation of the semigroup estimate \eqref{bd:semest2} is carried out in Section \ref{s:semigroup} via a spectral-theoretic approach.  It follows from a general Gearhart-Pr\"uss criterion for m-accretive operators devised in \cite{Wei18} based on a quantitative pseudo-spectral
bound. The proof is motivated by that of a similar result for the Laplace operator $\Delta$ in \cite{gallay19}. For the Laplace operator plus advection,  decay rates  akin to \eqref{bd:semest2} were obtained in \cites{CCZW20,Wei18} for a shear with infinitely many critical points, using the pseudo-spectral approach,  and for shear flows with finitely many critical points in \cite{BCZ17}, using hypocoercivity.
Such quantitative semigroup estimates are relevant  in the investigation of enhanced diffusion for passive scalars \cites{BW13,BCZ17,CD20,Wei18},  in the study of
asymptotic stability of particular solutions to the two-dimensional Navier-Stokes equations \cites{CZEW20,Gal18,LWZ20,WZZ20}, and have also applications to
several other nonlinear problems \cites{BH17, HT19, IXZArXiv19, KX16}.

In Section \ref{s:semigroup}, we prove a more general version of Proposition \ref{p:semest}, namely Proposition \ref{t:SemigroupEst}, for shear flows satisfying a certain condition, Assumption \ref{a:lowerbddE}, again inspired by \cite{gallay19}. This condition can be readily verified for $u$  in \eqref{def:sinym}. This is a main reason while we chose it as prototypical example. In fact, by refining the method of proof, we expect  an analog of Proposition \ref{p:semest} to hold for any shear flow with critical points of order $m$.

In what follows, $C$ denotes a generic constant that may depend on the domain, i.e., on $L_1$ and $L_2$. We utilize standard notation to denote function spaces, e.g. $H^k(\TT^2)$ is the usual $L^2$-based Sobolev space.

Finally, the paper is organized as follows. In Section \ref{s:global}, we obtain the bootstrap estimates and prove Theorem \ref{t:main}.
Then, in Section \ref{s:semigroup}, we establish the exponential stability of the semigroup generated by $H_\nu$ with the improved decay rate, using spectral estimates.

\subsection*{Acknowledgments} The authors thank Tarek Elgindi and Thierry Gallay for insightful discussions. A.M. was partially supported by the US National Science Foundation grants DMS-1909103. M.C.Z. and M.D. acknowledges funding from the Royal Society through a University Research Fellowship (URF\textbackslash R1\textbackslash 191492).

\section{Global existence for the KSE with shear} \label{s:global}

In this section, we establish global existence of solutions of the KSE in the presence of advection by a shear flow with a finite number of critical points.
The semigroup estimate \eqref{bd:semest}   allows to control these growing modes through a suitable decomposition of the solution and a bootstrap argument.

\subsection{Decomposition of the solution and proof of the main result} \label{s:decomposition}

In this section, we derive the system of coupled equations that describe the time evolution of the component $\pz$ of the solution in the kernel of the transport operator and the time evolution of the component  $\pn$ in the orthogonal complement.

 We will refer informally to $\pz$ and $\pn$ as the kernel and projected components, respectively.
Then $\pz$ satisfies
\begin{align}\label{e:pz}
\partial_t\pz+\frac{\nu}{2L_1}\int_{\T^1}|\nabla \pn+\nabla\pz|^2\,\dd x+\nu\partial_{y}^4\pz+\nu\partial_{y}^2\pz=0\,,
\end{align}
while $\pn$ satisfies
\begin{align}\label{e:pn}
\nonumber
\partial_t\pn+u(y)\partial_x\pn+\nu\Delta^2\pn&=-\frac{\nu}{2}|\nabla\pn+\nabla\pz|^2+\frac{\nu}{2L_1}\int_{\T^1}|\nabla\pn+\nabla\pz|^2\,\dd x-\nu\Delta\pn\\
&=-\frac{\nu}{2}|\nabla \pn|^2+\frac{\nu}{2}\lan |\nabla\pn|^2\ran-\nu\partial_y\pn\partial_y\pz-\nu\Delta\pn\,.
\end{align}
We remark that in the equation above the kernel component interacts with the projected ones through the term $\de_y \l \phi\r$. Denoting $\psi=\partial_y\pz$ for notational ease, we have
\begin{align} \label{e:psiEq}
\partial_t\psi + \frac{\nu}{2L_1}\int_{\T^1}\partial_y |\nabla \pn|^2\,\dd x+\nu \psi\partial_y \psi+\nu \partial_y^4\psi +\nu\partial_y^2\psi=0\,.
\end{align}
It was proved in \cite{FM20} that the unique local mild solution to \eqref{eq:AKSE} is also a weak solution satisfying the energy identity on the time of existence of the mild solution. In particular, $\pn\in L^\infty((0,t_0);L^2(\TT^2))\cap L^2((0,t_0);H^2(\TT^2))$, at least for a sufficiently small time $t_0>0$. Furthermore, it was shown in \cite{FM20} that the mild and  weak solution persists as long as its $L^2$ norm is finite, that is, if $T^\ast$ is the maximal time of existence of the solution, then
\[
    T^\ast <\infty \ \Rightarrow \  \limsup_{t\to T^\ast} \|\phi(t)\|_{L^2} =\infty.
\]
Our goal is to obtain a global bound on the $L^2$ norm of the solution via a bootstrap argument, from which global existence follows.
We will employ both energy estimates as well as semigroup estimates to exploit enhanced dissipation arising from the addition of the advection term on $\pn$.  

Let $\cS_t$ be the solution operator from $0$ to time $t\geq 0$ for the transport-hyperdiffusion equation:
\[
     \pa_t g +u(y)\,\pa_x g + \nu \Delta^2 g=0,
\]
that is, $\cS_t=\e^{-t H_\nu}$.
Then $\pn$ satisfies for $0\leq \bar{t}\leq t$,
\begin{align} \label{eq:pnMild}
\pn(t)&=\cS_{t-\bar{t}}(\pn(\bar{t})) + \nonumber\\
& \quad + \int_0^{t-\bar{t}}\cS_{t-\bar{t}-s}\Big(-\frac{\nu}{2}|\nabla\pn(s+\bar{t})|^2
+\frac{\nu}{2}\lan |\nabla\pn(s+\bar{t})|^2\ran -\nu\psi(s)\partial_y\pn(s+\bar{t})-\nu\Delta\pn(s+\bar{t})\Big)\,\dd s,
\end{align}
by Duhamel's principle.
We note that the ``forcing" term under the integral sign on the right-hand side of this equation is well controlled as long as $\pn\in L^\infty((0,t_0);L^2(\TT^2))\cap L^2((0,t_0);H^2(\TT^2))$, provided $\psi$ is also controlled. We stress that the equation for $\pn$ is not autonomous, even though the AKSE is.

Using the decay of $\cS_t$ on the projected component given by \eqref{bd:semest}, it follows from \eqref{eq:pnMild} that, for $0\leq s\leq t$,
\begin{align}\label{e:duhamel}
\nonumber
\norm{\pn(t)}_{L^2}&\leq \norm{\cS_t(\pn(s))}_{L^2}+C\nu\int_0^{t-s}\big(\norm{\nabla \pn}_{L^4}^2+\norm{\psi}_{L^4_y}\norm{\nabla\pn}_{L^4}+\norm{\Delta\pn}_{L^2}\big)(s+\tau)\,\dd \tau\\
\nonumber
&\leq\norm{\cS_t(\pn(s))}_{L^2}+C\nu\int_0^{t-s}\big(\norm{\pn}_{L^2}^{1/2}\norm{\Delta\pn}_{L^2}^{3/2}+\norm{\Delta\pn}_{L^2}\\
\nonumber
&\qquad\qquad\qquad\qquad\qquad\qquad+\norm{\pn}_{L^2}^{1/4}\norm{\Delta\pn}_{L^2}^{3/4}\norm{\psi}_{L^2_y}^{7/8}\norm{\pa^2_y\psi}_{L^2_y}^{1/8}\big)(s+\tau)\,\dd \tau\\
&\leq \norm{\cS_t(\pn(0))}_{L^2}+C\nu\int_0^t\big(\norm{\pn}_{L^2}^{1/2}\norm{\Delta\pn}_{L^2}^{3/2}+\norm{\Delta\pn}_{L^2}+\norm{\pn}_{L^2}^{1/4}\norm{\Delta\pn}_{L^2}^{3/4}\norm{\pa^2_y\psi}_{L^2_y}\big)(s+\tau)\,\dd s\,,
\end{align}
where in the above estimate we used the fact $\norm{\psi}_{L^2_y}\leq C\norm{\pa^2_y\psi}_{L^2_y}$ by applying Poincar\' e's inequality twice (we exploit here that $\psi$ and, hence, all its derivatives have zero average by definition), and the following Gagliardo-Nirenberg interpolation inequalities:
\begin{align}\label{e:soblev1}
\norm{\nabla\pn}_{L^4}\leq C\norm{\pn}_{L^2}^{1/4}\norm{\Delta\pn}_{L^2}^{3/4}\,, \qquad \norm{\psi}_{L^4_y}\leq C\norm{\psi}_{L^2_y}^{7/8}\norm{\pa^2_y\psi}_{L^2_y}^{1/8}  \,.
\end{align}
We next derive some energy estimates that will be needed for the bootstrap argument.
Multiplying \eqref{e:pn} by $\pn$ and integrating by part, using the periodic boundary conditions, yields:
\begin{align} \label{eq:FirstEnergyEst}
\frac{1}{2}\ddt\norm{\pn}_{L^2}^2+\nu \norm{\Delta \pn}_{L^2}^2&=-\frac{\nu}{2}\int_{\T^2}|\nabla \pn|^2\pn\,\dd x\dd y+\frac{\nu}{2L_1}\int_{\T^2}\left(\int_{\T^1}|\nabla\pn|^2\,\dd x\right)\pn\,\dd x\dd y\\
&\qquad \qquad -\nu\int_{\T^2}\psi\partial_y\pn\pn\,\dd x\dd y+\nu\norm{\nabla \pn}_{L^2}^2\\
&\leq C\nu \norm{\nabla\pn}_{L^4}^2\norm{\pn}_{L^2}+C\nu\norm{\psi}_{L^2_y}\norm{\nabla\pn}_{L^4}\norm{\pn}_{L^4}+\nu \norm{\nabla \pn}_{L^2}^2\,.
\end{align}
We recall the Gargliardo-Nirenberg interpolation inequalities in~\eqref{e:soblev1} and
\begin{align}\label{e:soblev}
\norm{\pn}_{L^4}&\leq C\norm{\pn}_{L^2}^{3/4}\norm{\Delta\pn}_{L^2}^{1/4}\,.
\end{align}
These estimate imply:
\begin{align}
\frac{1}{2}\ddt\norm{\pn}_{L^2}^2+\nu \norm{\Delta \pn}_{L^2}^2
&\leq C\nu\norm{\pn}_{L^2}^{3/2}\norm{\Delta\pn}_{L^2}^{3/2}+C\nu\norm{\psi}_{L^2_y}\norm{\pn}_{L^2}\norm{\Delta \pn}_{L^2}\nonumber\\
&\quad+\nu\norm{\pn}_{L^2}\norm{\Delta\pn}_{L^2},
\end{align}
where we have integrated by parts in the last term in \eqref{eq:FirstEnergyEst}.
Applying Young's inequality, we further get
\begin{align}\label{e:energypn}
\ddt\norm{\pn}_{L^2}^2+\nu \norm{\Delta \pn}_{L^2}^2
&\leq C\nu \norm{\pn}_{L^2}^6+C\nu\norm{\pn}_{L^2}^2+C\nu\norm{\pn}_{L^2}^2\norm{\psi}_{L^2_y}^2\,,
\end{align}
using again the 1D Poincar\'e's inequality twice to bound both the $L^2$ norm of $\psi$ as well as  that of $\pa_y \psi$.

We also recall that the enhanced diffusion estimate \eqref{bd:semest} for $S_t$:
\begin{align*}
\norm{\cS_tg}_{L^2}\leq 5\, \e^{-\lambda_\nu t}\norm{g}_{L^2}\,,
\end{align*}
for any $g\in L^2(\T^2)$ with $\int_{\T^1}g(x,y)\,\dd x=0$.
 Above $\lambda_\nu$ satisfies
\begin{align}
\frac{\nu}{\lambda_\nu}\to 0\,,\text{ as $\nu\to 0$.}
\end{align}
In view of \eqref{eq:pnMild}, the regularity of the mild and weak solution and the continuation principle, for all sufficiently small times $t\geq s\geq 0$ we can assume that
\begin{enumerate} [label=(H\arabic*), ref=(H\arabic*)]
\item \label{i:bootstrap1} $\norm{\pn(t)}_{L^2}\leq 8 \e^{-\lambda_\nu (t-s)/4}\norm{\pn(s)}_{L^2}$,
\item  \label{i:bootstrap2} $\nu \int_s^t \norm{\Delta\pn(\tau)}_{L^2}^2\,\dd \tau\leq  4\norm{\pn(s)}_{L^2}^2$.
\end{enumerate}
Let $t_0>0$ be the maximal time such that the estimates above hold on $[0,t_0]$. Following \cite{BS07}, we refer to \mbox{\ref{i:bootstrap1} - \ref{i:bootstrap2}} with $t\in [0,t_0]$ as the {\em bootstrap assumptions}.
The next lemma ensures suitable bounds on $\psi$ once the bootstrap assumptions~\ref{i:bootstrap1} and~\ref{i:bootstrap2} hold.

\begin{lemma}\label{l:linftyl2}
Let  $0<L_2<2\pi$. Assume the bootstrap assumptions~\ref{i:bootstrap1} and~\ref{i:bootstrap2}.  There exists a $\nu$-independent constant $C_1=C_1(\norm{\pn(0)}_{L^2},\norm{\psi(0)}_{L^2_y})$, which can be explicitly
computed, such that 
\begin{align}\label{e:partialpsi}
\norm{\psi(t)}_{L^2_y}^2+\nu\int_0^t\norm{\pa^2_y\psi(s)}_{L^2_y}^2\,\dd s\leq C_1\,,
\end{align}
for all $t\in [0,t_0]$.
\end{lemma}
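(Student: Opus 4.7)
The plan is to derive the one-dimensional $L^2_y$ energy estimate for $\psi$ from \eqref{e:psiEq}, exploit the assumption $L_2<2\pi$ (via Poincaré) to turn the naively destabilizing term $-\nu\|\pa_y\psi\|_{L^2_y}^2$ into a strictly coercive sign, and then absorb the forcing driven by $\pn$ using the bootstrap hypotheses \ref{i:bootstrap1}--\ref{i:bootstrap2}.

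Multiplying \eqref{e:psiEq} by $\psi$ and integrating over $[0,L_2]$, the cubic self-advection drops out since $\int\psi^2\pa_y\psi\,\dd y = \tfrac{1}{3}\int\pa_y(\psi^3)\,\dd y=0$ by periodicity, while the two linear operators integrate by parts to produce
\begin{equation*}
\frac{1}{2}\ddt\|\psi\|_{L^2_y}^2 + \nu\|\pa_y^2\psi\|_{L^2_y}^2 - \nu\|\pa_y\psi\|_{L^2_y}^2 = \frac{\nu}{2L_1}\int_{\T}\pa_y\psi\int_{\T}|\nabla \pn|^2 \,\dd x\,\dd y,
\end{equation*}
where I have also integrated by parts in $y$ on the forcing to transfer the derivative onto $\psi$. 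Since $\psi=\pa_y\pz$ and $\pa_y\psi$ are both periodic with zero mean, Poincaré on $[0,L_2]$ yields $\|\pa_y\psi\|_{L^2_y}^2 \leq (L_2/2\pi)^2\|\pa_y^2\psi\|_{L^2_y}^2$. Setting $\delta := 1-(L_2/2\pi)^2$, which is strictly positive precisely because $L_2<2\pi$, the linear contribution on the left is bounded below by $\delta\nu\|\pa_y^2\psi\|_{L^2_y}^2$.

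For the forcing, I bound $\pa_y\psi$ in $L^\infty_y$ via the one-dimensional Sobolev embedding for zero-mean periodic functions, $\|\pa_y\psi\|_{L^\infty_y}\leq C\|\pa_y^2\psi\|_{L^2_y}$, and use the standard interpolation $\|\nabla\pn\|_{L^2(\T^2)}^2\leq\|\pn\|_{L^2}\|\Delta\pn\|_{L^2}$. This gives a forcing bound of $C\nu\|\pa_y^2\psi\|_{L^2_y}\|\pn\|_{L^2}\|\Delta\pn\|_{L^2}$, and a Young's inequality absorbs the $\|\pa_y^2\psi\|_{L^2_y}$ factor into $\tfrac{\delta\nu}{2}\|\pa_y^2\psi\|_{L^2_y}^2$, leaving a remainder of order $C_\delta\,\nu\|\pn\|_{L^2}^2\|\Delta\pn\|_{L^2}^2$. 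Integrating in time, applying \ref{i:bootstrap1} at $s=0$ to bound $\|\pn(\tau)\|_{L^2}^2\leq 64\|\pn(0)\|_{L^2}^2$ uniformly and pulling this out of the integral, and then \ref{i:bootstrap2} to control $\nu\int_0^t\|\Delta\pn\|_{L^2}^2\,\dd \tau \leq 4\|\pn(0)\|_{L^2}^2$, one arrives at the $\nu$-independent estimate $C_1 = \|\psi(0)\|_{L^2_y}^2 + C(\delta,L_1)\|\pn(0)\|_{L^2}^4$.

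The main obstacle, and the whole reason the lemma requires $L_2<2\pi$, is the delicate sign management of the linear $y$-dynamics: the equation for $\psi$ is itself a forced one-dimensional KSE with its own anti-diffusion $\nu\pa_y^2\psi$, and without the geometric restriction Poincaré would return $\delta\leq 0$ so that the hyper-diffusion would fail to dominate. Once this coercivity is in place the argument reduces to a linear absorption, and the two bootstrap assumptions are tailored exactly to close it: \ref{i:bootstrap1} supplies a pointwise-in-time uniform bound on $\|\pn\|_{L^2}$, while \ref{i:bootstrap2} supplies the time-integrated bound on $\nu\|\Delta\pn\|_{L^2}^2$ needed to tame the forcing.
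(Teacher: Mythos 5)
Your proof is correct, and it follows the same overall skeleton as the paper's: test \eqref{e:psiEq} with $\psi$, kill the cubic term by periodicity, integrate the forcing by parts onto $\pa_y\psi$, use the Poincar\'e inequality (valid precisely because $L_2<2\pi$) to dominate the anti-diffusive term $\nu\norm{\pa_y\psi}_{L^2_y}^2$ by a fraction of $\nu\norm{\pa_y^2\psi}_{L^2_y}^2$, and close with \ref{i:bootstrap1}--\ref{i:bootstrap2}. The one genuine difference is how you handle the forcing. The paper estimates it as $\frac{\nu}{2L_1}\norm{\nabla\pn}_{L^4}^2\norm{\pa_y\psi}_{L^2_y}$, interpolates $\norm{\pa_y\psi}_{L^2_y}\leq\norm{\pa_y^2\psi}_{L^2_y}^{1/2}\norm{\psi}_{L^2_y}^{1/2}$, and is therefore left after Young's inequality with a term $C\nu\norm{\pn}_{L^2}^{2/3}\norm{\Delta\pn}_{L^2}^2\norm{\psi}_{L^2_y}^{2/3}$ containing $\norm{\psi}$ itself; this forces a Gr\"onwall/integrating-factor step and produces a constant $C_1$ with an exponential factor $\e^{16C\norm{\pn(0)}_{L^2}^{8/3}}$. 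You instead put $\pa_y\psi$ in $L^\infty_y$ (legitimate, since $\pa_y\psi$ has zero mean, so $\norm{\pa_y\psi}_{L^\infty_y}\leq C\norm{\pa_y^2\psi}_{L^2_y}$) and $|\nabla\pn|^2$ in $L^1(\T^2)$, so that after Young's inequality the right-hand side is $C\nu\norm{\pn}_{L^2}^2\norm{\Delta\pn}_{L^2}^2$ with no $\psi$-dependence at all. This closes the estimate by direct time integration, with no Gr\"onwall argument, and yields the cleaner, merely polynomial constant $C_1=\norm{\psi(0)}_{L^2_y}^2+C\norm{\pn(0)}_{L^2}^4$ (up to the harmless factor $2/\delta$ needed to match the normalization in \eqref{e:partialpsi}). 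Both arguments are valid; yours is a mild but real simplification of the paper's.
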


\begin{proof}
First from the energy estimate and Poincar\'e's inequality, we have
\begin{align}\label{e:psi2}
\frac{1}{2}\ddt\norm{\psi}_{L^2_y}^2+\nu \norm{\pa^2_y\psi}_{L^2_y}^2&=\nu\norm{\partial_y\psi}_{L^2_y}^2+\frac{\nu}{2L_1}\int_{\T^1}|\nabla \pn|^2\partial_y\psi\,\dd x\leq \nu\Big(\frac{L_2}{2\pi}\Big)^2\norm{\partial_y^2\psi}_{L^2_y}^2+\frac{\nu}{2L_1} \norm{\nabla \pn}_{L^4}^2\norm{\partial_y\psi}_{L^2_y}\,.
\end{align}
It follows from the Gagliardo-Nirenberg inequality  that
\begin{align}
&\norm{\partial_y\psi}_{L^2_y}^2\leq \norm{\pa^2_y\psi}_{L^2_y}\norm{\psi}_{L_y^2},\\
&\norm{\nabla \pn}_{L^4}\leq C\norm{\pn}_{L^2}^{1/4}\norm{\Delta\pn}_{L^2}^{3/4}.
\end{align}
Appealing to the two bounds above, estimate~\eqref{e:psi2} becomes
\begin{align*}
\frac{1}{2}\ddt\norm{\psi}_{L^2_y}^2+\nu\Big(1-\big(\frac{L_2}{2\pi}\big)^2\Big)\norm{\pa^2_y\psi}_{L^2_y}^2
&\leq C\nu\norm{\pn}_{L^2}^{1/2}\norm{\Delta \pn}_{L^2}^{3/2}\norm{\de_{y}^2\psi}_{L_y^2}^{1/2}\norm{\psi}_{L_y^2}^{1/2}\,.
\end{align*}
 It then follows by Young's inequality that
\begin{align}
\label{e:psitmp1}
\ddt\norm{\psi}_{L^2_y}^2+\nu\Big(1-\big(\frac{L_2}{2\pi}\big)^2\Big) \norm{\pa^2_y\psi}_{L^2_y}^2&\leq  C\nu \norm{\pn}_{L^2}^{2/3}\norm{\Delta\pn}_{L^2}^2\norm{\psi}_{L^2_y}^{2/3} \\
\label{e:psitmp2}
&\leq C\nu \norm{\pn}_{L^2}^{2/3}\norm{\Delta\pn}_{L^2}^2+C\nu \norm{\pn}_{L^2}^{2/3}\norm{\Delta\pn}_{L^2}^2\norm{\psi}_{L^2_y}^2\,.
\end{align}
We define an integrating factor $\mu=\exp\big(-C\nu\int_0^t\norm{\pn}_{L^2}^{2/3}\norm{\Delta\pn}_{L^2}^2\,\dd s\big)$. Then solving~\eqref{e:psitmp2} gives
\begin{align} \label{eq:psiEst2}
\norm{\psi(t)}_{L^2_y}^2&\leq C\nu \mu^{-1}\int_0^t\norm{\pn(s)}_{L^2}^{2/3}\norm{\Delta\pn(s)}_{L^2}^2\,\dd s+\mu^{-1}\norm{\psi(0)}_{L^2_y}^2\nonumber\\
&\leq \left(16 C\e^{16C\norm{\pn(0)}_{L^2}^{8/3}}\norm{\pn(0)}_{L^2}^{8/3}+\e^{16C\norm{\pn(0)}_{L^2}^{8/3}}\norm{\psi(0)}_{L^2_y}^2\right):=C_1\,,
\end{align}
where the last inequality follows by the bootstrap assumptions~\ref{i:bootstrap1} and~\ref{i:bootstrap2}. By using~\eqref{eq:psiEst2} in \eqref{e:psitmp1}, we get~\eqref{e:partialpsi}.
\end{proof}

We show below in Subsection \ref{s:bootstrap} that, in fact,  there exists $\nu_0>0$ small enough such that, if $\nu<\nu_0$, then for any $0\leq s\leq t\leq t_0$,
\begin{enumerate} [label=(B\arabic*), ref=(B\arabic*)]
\item \label{i:bootstrap3} $\norm{\pn(t)}_{L^2}\leq 4 \e^{-\lambda_\nu (t-s)/4}\norm{\pn(s)}_{L^2}$,
\item  \label{i:bootstrap4} $\nu \int_s^t \norm{\Delta\pn(\tau)}_{L^2}^2\,\dd \tau\leq  2\norm{\pn(s)}_{L^2}^2$\,.
\end{enumerate}
 We refer to \ref{i:bootstrap3}-\ref{i:bootstrap4} as the {\em bootstrap estimates}.
Assuming temporarily this fact, we proceed with 
the proof of  Theorem \ref{t:main}.

\begin{proof}[Proof of Theorem \ref{t:main}]
From Lemma~\ref{lem:B2} and Lemma~\ref{lem:B1} and the definition of $t_0$, we must have $t_0=\infty$. In particular, $\pn\in L^\infty([0,\infty);L^2(\TT^2))\cap L^2([0,\infty);H^2(\T^2))$. By Lemma~\ref{l:linftyl2}, we have $\psi\in L_{loc}^\infty([0,\infty);L^2(\T^1))\cap L_{loc}^2([0,\infty);H^2(\T^1))$. By the triangle and Poincar\'e's inequalities, it then follows $\Delta\phi\in L_{loc}^2([0,\infty);L^2(\T^2))$. If we further denote
\begin{align}\label{eq:defphibar}
\bar \phi=\frac{1}{L_1L_2}\int_{\T^2}\phi(x,y)\,\dd x\dd y= \frac{1}{L_2}\int_{\T^1}\pz\,\dd y,
\end{align}
then from \eqref{e:pz} we have
\begin{align}
\partial_t\bar \phi&=-\frac{\nu}{2L_1L_2}\int_{\T^2}|\nabla \pn+\nabla\pz|^2\,\dd x\dd y =-\frac{\nu}{2L_1L_2}\int_{\T^2} |\nabla \pn|^2\,\dd x\dd y-\frac{\nu}{2L_2}\int_{\T^1}|\psi|^2\,\dd y\,.
\end{align}
By integrating the above equation and applying estimate~\eqref{e:partialpsi} and~\ref{i:bootstrap4}, we obtain $\bar\phi\in L_{loc}^\infty ([0,\infty))$. By Lemma~\ref{l:linftyl2}, it follows that
$\psi\in L_{loc}^\infty([0,\infty);L^2(\T^1))$. Hence, \eqref{eq:defphibar} and the Poincar\'e inequality imply
that $\pz\in L_{loc}^\infty([0,\infty);L^2(\T^1))$. As a consequence, $\phi\in L_{loc}^\infty([0,\infty);L^2(\T^2))$.
Finally, we note that $\nabla^2 \phi=\nabla^2 \pn +\nabla \psi$ so that $\phi \in L^2_{loc}([0,\infty);H^2(\T^2))$.
This concludes the proof.
\end{proof}

\subsection{Bootstrap estimates} \label{s:bootstrap}

It remains to establish the bootstrap estimates \ref{i:bootstrap3}-\ref{i:bootstrap4}, which we accomplish through a series of lemmas. 
We address \ref{i:bootstrap4} first. 

\begin{lemma}\label{lem:B2}
Let  $0<L_2<2\pi$. Assume the bootstrap assumptions~\ref{i:bootstrap1} and~\ref{i:bootstrap2}. There exists $\nu_0=\nu_0(\norm{\pn(0)}_{L^2})$, explicitly computable, with the following property: for any $0\leq s\leq t \leq t_0$ and for any $\nu\leq \nu_0$,
 it holds that 
\begin{align}
\nu \int_s^t \norm{\Delta\pn(\tau)}_{L^2}^2\,\dd \tau\leq  2\norm{\pn(s)}_{L^2}^2\,.
\end{align}
In particular, \ref{i:bootstrap4} holds.
\end{lemma}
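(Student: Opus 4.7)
The plan is to integrate the energy inequality \eqref{e:energypn} in time from $s$ to $t$, and then use the exponential decay supplied by the bootstrap assumption \ref{i:bootstrap1} together with the uniform bound on $\psi$ from Lemma \ref{l:linftyl2} to convert every $\nu$-integral in time into a factor of $\nu/\lambda_\nu$, which by \eqref{bd:semest} tends to zero as $\nu\to 0$. A sufficiently small choice of $\nu_0$ will then absorb the extra contributions into $\|\pn(s)\|_{L^2}^2$ and reduce the constant from $4$ to $2$.

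More concretely, the first step is to drop the nonnegative term $\|\pn(t)\|_{L^2}^2$ on the left-hand side of the integrated form of \eqref{e:energypn} and obtain
\[
\nu \int_s^t \norm{\Delta \pn(\tau)}_{L^2}^2\,\dd\tau \leq \norm{\pn(s)}_{L^2}^2 + C\nu \int_s^t \Big( \norm{\pn}_{L^2}^6 + \norm{\pn}_{L^2}^2 + \norm{\pn}_{L^2}^2\norm{\psi}_{L^2_y}^2 \Big)(\tau)\,\dd\tau.
\]
Next, I would apply \ref{i:bootstrap1} to estimate $\norm{\pn(\tau)}_{L^2}^2 \leq 64\,\e^{-\lambda_\nu(\tau-s)/2}\norm{\pn(s)}_{L^2}^2$, and, before integrating, extract from the sixth-order term a factor of $\norm{\pn(s)}_{L^2}^2$ using $\norm{\pn(s)}_{L^2}^6 \leq (8\norm{\pn(0)}_{L^2})^4\,\norm{\pn(s)}_{L^2}^2$ (which itself follows from \ref{i:bootstrap1} applied between $0$ and $s$). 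Evaluating the elementary time integrals of $\e^{-c\lambda_\nu(\tau-s)}$ on $[s,t]$ then produces a factor of $1/\lambda_\nu$ in each contribution.

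For the mixed term involving $\psi$, the plan is simply to pull $\norm{\psi}_{L^2_y}^2$ out of the time integral via the pointwise-in-time bound $\norm{\psi(\tau)}_{L^2_y}^2 \leq C_1$ provided by Lemma \ref{l:linftyl2}, and then treat the remaining $\int_s^t \norm{\pn}_{L^2}^2\,\dd\tau$ exactly as above. Collecting all estimates yields an inequality of the shape
\[
\nu \int_s^t \norm{\Delta\pn(\tau)}_{L^2}^2\,\dd\tau \leq \norm{\pn(s)}_{L^2}^2\Big( 1 + C\,\frac{\nu}{\lambda_\nu}\,K\big(\norm{\pn(0)}_{L^2},C_1\big)\Big),
\]
with $K$ explicit and independent of $\nu$.

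The only real issue is to ensure that the constant $K$ depends only on the initial data (through $\norm{\pn(0)}_{L^2}$ and the constant $C_1$ from Lemma \ref{l:linftyl2}), and not on $t$, $s$, or on bounds that themselves depend on $\nu$; this is precisely what the exponential decay in \ref{i:bootstrap1} gives, because every occurrence of $\norm{\pn(\tau)}_{L^2}$ is controlled uniformly by $8\norm{\pn(0)}_{L^2}$. Finally, since $\nu/\lambda_\nu \to 0$ as $\nu \to 0$ by \eqref{bd:semest}, one chooses $\nu_0>0$ so small that $C(\nu/\lambda_\nu)K \leq 1$ for every $\nu\leq \nu_0$, which delivers the improved bound $\nu\int_s^t \norm{\Delta\pn}_{L^2}^2 \,\dd\tau \leq 2\norm{\pn(s)}_{L^2}^2$ and establishes \ref{i:bootstrap4}.
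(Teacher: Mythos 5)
Your proposal is correct and follows essentially the same route as the paper: integrate the energy estimate \eqref{e:energypn} in time, bound $\norm{\psi}_{L^2_y}^2\leq C_1$ via Lemma \ref{l:linftyl2}, use \ref{i:bootstrap1} to reduce the sixth-power term and to produce a factor $\nu/\lambda_\nu$ after integrating the exponential decay, and then choose $\nu_0$ so that $C(\nu_0/\lambda_{\nu_0})(\norm{\pn(0)}_{L^2}^4+1+C_1)\leq 1$. No gaps; the only cosmetic slip is writing $\norm{\pn(s)}_{L^2}^6$ where the integrand $\norm{\pn(\tau)}_{L^2}^6$ is meant.
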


\begin{proof}
 The bootstrap assumptions, Lemma~\ref{l:linftyl2} and the energy estimate~\eqref{e:energypn} give
\begin{align}
\nu\int_s^t\norm{\Delta\pn(\tau)}_{L^2}^2\,\dd \tau&\leq \norm{\pn(s)}_{L^2}^2+C\nu\int_s^t \norm{\pn(\tau)}_{L^2}^6+(1+C_1)\norm{\pn(\tau)}_{L^2}^2\,\dd \tau\notag\\
&\leq \norm{\pn(s)}_{L^2}^2+\frac{\nu}{\lambda_\nu}C(\norm{\pn(0)}_{L^2}^4+1+C_1)\norm{\pn(s)}_{L^2}^2
\label{bd:L2L2Delta}
\end{align}
We observe that, since $\nu/\lambda_\nu\to 0$ as $\nu\to 0$, there exists $\nu_0$ such that
$$
    \frac{\nu_0}{\lambda_{\nu_0}} \leq \frac{1}{C(\norm{\pn(0)}_{L^2}^4+1+C_1)} .
$$
Hence, by combining the choice $\nu_0$ with \eqref{bd:L2L2Delta} we conclude the proof of the Lemma.
\end{proof}

It remains to prove \ref{i:bootstrap3}, which we accomplish in different steps. The next lemma states that within a fixed length of time, the quantity  $\norm{\pn}_{L^2}$ will never grow too fast.

\begin{lemma}\label{l:slowgrowth}
Let  $0<L_2<2\pi$. Assume the bootstrap assumptions~\ref{i:bootstrap1} and~\ref{i:bootstrap2}, and fix $\tau^*=4/\lambda_\nu$.
 For any  $0\leq t_1<t_0$, there exists $\nu_0=\nu_0(\norm{\pn(0)}_{L^2})$ such that  for any $\nu\leq \nu_0$ 
 there holds
\begin{align}
\norm{\pn(t)}_{L^2}\leq \sqrt 2\norm{\pn(t_1)}_{L^2}\,.
\end{align}
for  all  $t\in [t_1, t_1+\tau^*]\cap [0,t_0]$.  
\end{lemma}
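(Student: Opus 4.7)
The plan is to apply a Grönwall-type argument on $[t_1, t_1 + \tau^*]$ starting from the energy estimate \eqref{e:energypn}, exploiting the fact that the length $\tau^*$ of the interval is precisely $4/\lambda_\nu$, so that all time integrals of the ``bad'' terms end up proportional to $\nu/\lambda_\nu$, a quantity that tends to $0$ as $\nu \to 0$.

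More concretely, starting from \eqref{e:energypn} I would factor out $\|\pn\|_{L^2}^2$ on the right-hand side and drop the dissipation, arriving at
\begin{equation*}
\ddt\|\pn(t)\|_{L^2}^2 \leq \|\pn(t)\|_{L^2}^2 \Bigl[ C\nu \|\pn(t)\|_{L^2}^4 + C\nu + C\nu \|\psi(t)\|_{L^2_y}^2 \Bigr].
\end{equation*}
Grönwall's inequality on $[t_1, t]$ then gives
\begin{equation*}
\|\pn(t)\|_{L^2}^2 \leq \|\pn(t_1)\|_{L^2}^2 \exp\!\left( C\nu \int_{t_1}^{t} \|\pn(\sigma)\|_{L^2}^4 \,\dd\sigma + C\nu(t-t_1) + C\nu \int_{t_1}^{t} \|\psi(\sigma)\|_{L^2_y}^2 \,\dd\sigma \right).
\end{equation*}
To obtain the desired factor $\sqrt{2}$, it suffices to show that, for $\nu$ small, the argument of the exponential is bounded by $\log 2$.

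For the first integral I would use the bootstrap assumption \ref{i:bootstrap1} applied with $s=0$, which yields $\|\pn(\sigma)\|_{L^2}^4 \leq 8^4 \,\e^{-\lambda_\nu \sigma} \|\pn(0)\|_{L^2}^4$; integrating over $[t_1, t] \subset [0,\infty)$ gives a bound of the form $C \|\pn(0)\|_{L^2}^4 / \lambda_\nu$, and multiplied by $\nu$ this is $C(\nu/\lambda_\nu)\|\pn(0)\|_{L^2}^4$. The middle term is trivially $C\nu \tau^* = 4C \nu/\lambda_\nu$. For the third integral I would invoke Lemma~\ref{l:linftyl2} to bound $\|\psi(\sigma)\|_{L^2_y}^2 \leq C_1$ uniformly, and then multiply by the interval length $\tau^* = 4/\lambda_\nu$, giving $4 C C_1 \nu/\lambda_\nu$. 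Summing,
\begin{equation*}
\ddt \text{-argument} \leq C\, \frac{\nu}{\lambda_\nu}\bigl(\|\pn(0)\|_{L^2}^4 + 1 + C_1\bigr).
\end{equation*}

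Since $\nu/\lambda_\nu \to 0$ as $\nu \to 0$, I can choose $\nu_0 = \nu_0(\|\pn(0)\|_{L^2})$ small enough (note $C_1$ itself depends only on $\|\pn(0)\|_{L^2}$ and $\|\psi(0)\|_{L^2_y}$) so that this quantity is at most $\log 2$ for all $\nu \leq \nu_0$, completing the proof. The only subtle point, rather than a genuine obstacle, is making sure to use \ref{i:bootstrap1} with reference time $s=0$ so that the exponential decay is available globally and converts the $\|\pn\|_{L^2}^4$ integral into a $1/\lambda_\nu$ factor; using $s = t_1$ would only give $\|\pn(t_1)\|_{L^2}^4 \cdot \tau^*$, which is also fine but requires being a little careful since $\|\pn(t_1)\|_{L^2}$ is itself controlled via the bootstrap.
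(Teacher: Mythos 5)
Your proposal is correct, and it rests on the same two pillars as the paper's argument: the energy inequality \eqref{e:energypn} together with the observation that the interval length $\tau^*=4/\lambda_\nu$ makes every offending term proportional to $\nu/\lambda_\nu\to 0$. The execution differs, though. You factor $\norm{\pn}_{L^2}^2$ out of the right-hand side and run a linear Gr\"onwall argument, controlling $\int\norm{\pn}_{L^2}^4$ either by integrating the exponential decay from \ref{i:bootstrap1} with $s=0$ (giving a clean $1/\lambda_\nu$) or, as you note, by the uniform bound times $\tau^*$. The paper instead uses \ref{i:bootstrap1} only through the uniform bound $\norm{\pn(t)}_{L^2}\leq 8\norm{\pn(0)}_{L^2}$ to reduce \eqref{e:energypn} to the autonomous Riccati-type inequality $\tfrac{\dd}{\dd t}X\leq C_2\nu(X^2+1)$ with $X=\norm{\pn}_{L^2}^2$, and then computes by separation of variables the escape time $T(B)=\frac{1}{\nu C_2}(\arctan(2B^2)-\arctan(B^2))$ needed for $X$ to double, choosing $\nu_0$ so that $\tau^*\leq T(8\norm{\pn(0)}_{L^2})$. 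Both routes are elementary and yield the same smallness condition on $\nu/\lambda_\nu$ in terms of $\norm{\pn(0)}_{L^2}$ (and, through $C_1$, of $\norm{\psi(0)}_{L^2_y}$ --- a dependence present in the paper's $C_2$ as well); your Gr\"onwall version is arguably the more streamlined of the two, while the paper's ODE comparison makes the ``doubling time'' interpretation explicit.
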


\begin{proof}
We will assume that $\nu_0$ is small enough so that Lemma \ref{lem:B2} applies.
Again by Lemma~\ref{l:linftyl2}, the energy estimate~\eqref{e:energypn}, and the bootstrap assumption \ref{i:bootstrap1},  for some positive
$C_2=C_2(\norm{\pn(0)}_{L^2},\norm{\psi(0)}_{L^2_y})$ we have that 
\begin{align}
\ddt\norm{\pn}_{L^2}^2&\leq C\nu\norm{\pn}_{L^2}^6+C (1+C_1) \nu \norm{\pn}_{L^2}^2\notag\\
&\leq C(1+C_1)\nu \norm{\phi_{\neq}(0)}_{L^2}^2(\norm{\phi_{\neq}}_{L^2}^4+1)\notag\\
&\leq  C_2\nu \left(\norm{\phi_{\neq}}_{L^2}^4+1\right).\label{e:stima}
\end{align}
Now, we define $T(B)$ as
\begin{align}
 T(B)=\frac{1}{\nu C_2}\int_{B^2}^{2 B^2}\frac{\dd y}{y^2+1}=\frac{1}{\nu C_2}(\arctan(2B^2)-\arctan(B^2)).
\end{align}
 It is easy to see that  $T(\cdot)$ is a decreasing function and, since by the bootstrap assumption   \ref{i:bootstrap1} we have that $\norm{\pn(t_1)}_{L^2}\leq 8 \norm{\pn(0)}_{L^2}$,
 it follows that $T(\norm{\pn(t_1)}_{L^2})\geq T(8\norm{\pn(0)}_{L^2})$.
In light of \eqref{e:stima}, by the definition of $T(B)$ we have that for any $t\in [t_1, t_1+T(8\norm{\pn(0)}_{L^2})]\cap [0,t_0]$,  the estimate $\norm{\pn(t)}_{L^2}\leq \sqrt 2\norm{\pn(t_1)}_{L^2}$ holds. The lemma is now proved if we choose $\tau^*\leq T(8\norm{\pn(0)}_{L^2})$, which is equivalent to asking for
\begin{align}\label{e:nu021}
\frac{\nu}{\lambda_{\nu}}\leq \frac{1}{4 C_2} (\arctan(128 \norm{\pn(0)}_{L^2}^2)-\arctan(64 \norm{\pn(0)}_{L^2}^2)).
\end{align}
Finally, $\nu/\lambda_\nu\to 0$ as $\nu\to0$, so it is enough to satisfy \eqref{e:nu021} for $\nu_0$. This concludes the proof.
\end{proof}

The next lemma shows that by, selecting $\nu$ sufficiently small, a constant fraction  of $\norm{\pn}_{L^2}$ decays after a fixed length of time.
\begin{lemma}\label{l:decaytau}
Let  $0<L_2<2\pi$. Assume the bootstrap assumptions~\ref{i:bootstrap1} and~\ref{i:bootstrap2}, and fix again  $\tau^*=4/\lambda_\nu$. If $t_0\geq \tau^*$ then there exists $\nu_0=\nu_0(\norm{\pn(0)}_{L^2})$, explicitly computable, with the following property: for any $s\in [0,t_0-\tau^*]$ and for any $\nu\leq \nu_0$,
 \begin{align}
 \norm{\pn(\tau^*+s)}_{L^2}\leq \frac{1}{\e}\norm{\pn(s)}_{L^2}\,.
 \end{align}
\end{lemma}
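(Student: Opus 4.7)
The strategy is to apply Duhamel's formula on the interval $[s,s+\tau^*]$ and exploit the enhanced-dissipation estimate \eqref{bd:semest}. Writing
\[
\pn(s+\tau^*) = \cS_{\tau^*}(\pn(s)) + \int_0^{\tau^*}\cS_{\tau^*-\tau}\big(F(s+\tau)\big)\,\dd\tau,
\]
where $F$ collects the right-hand side of \eqref{e:pn}, I observe that each term in $F$ has zero $x$-average, so \eqref{bd:semest} applies to both pieces. The linear piece is immediate: with $\tau^* = 4/\lambda_\nu$,
\[
\norm{\cS_{\tau^*}(\pn(s))}_{L^2} \leq 5 e^{-\lambda_\nu \tau^*}\norm{\pn(s)}_{L^2} = 5 e^{-4}\norm{\pn(s)}_{L^2},
\]
which is already strictly below $e^{-1}\norm{\pn(s)}_{L^2}$, since $5 e^{-4}\approx 0.092 < 0.368 \approx e^{-1}$. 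It therefore suffices to show that $\nu_0$ can be chosen so that the Duhamel integral is bounded by $(e^{-1}-5 e^{-4})\norm{\pn(s)}_{L^2}$ for all $\nu \leq \nu_0$.

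To control the Duhamel integral, I would use the bound \eqref{e:duhamel} applied on the shifted interval $[s,s+\tau^*]$, which reduces the task to estimating the three nonlinear time-integrals
\[
\nu\int\norm{\pn}_{L^2}^{1/2}\norm{\Delta\pn}_{L^2}^{3/2}\,\dd\tau,\qquad \nu\int\norm{\Delta\pn}_{L^2}\,\dd\tau,\qquad \nu\int\norm{\pn}_{L^2}^{1/4}\norm{\Delta\pn}_{L^2}^{3/4}\norm{\pa_y^2\psi}_{L^2_y}\,\dd\tau.
\]
Each is handled by H\"older's inequality in time, combined with three sources of control already available: bootstrap hypothesis \ref{i:bootstrap2}, giving $\nu\int_s^{s+\tau^*}\norm{\Delta\pn}_{L^2}^2 \leq 4\norm{\pn(s)}_{L^2}^2$; Lemma \ref{l:slowgrowth}, giving $\norm{\pn(\cdot)}_{L^2} \leq \sqrt{2}\,\norm{\pn(s)}_{L^2}$ pointwise on the interval; and Lemma \ref{l:linftyl2}, giving $\norm{\psi(\cdot)}_{L^2_y}^2 \leq C_1$ together with $\nu \int \norm{\pa_y^2\psi}_{L^2_y}^2 \,\dd\tau \leq C_1$. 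A careful accounting of the powers of $\nu$, together with the identity $\tau^* = 4/\lambda_\nu$, shows that each of the three contributions takes the form $C (\nu/\lambda_\nu)^{\beta}$ times a polynomial in $\norm{\pn(s)}_{L^2}$, $\norm{\pn(0)}_{L^2}$, and $C_1$, with $\beta>0$ (specifically $\beta = 1/4$, $1/2$, and $9/16$ for the three terms, respectively).

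Since $\nu/\lambda_\nu\to 0$ as $\nu\to 0$, one can then select $\nu_0$ small enough (also smaller than the thresholds required by Lemma \ref{lem:B2} and Lemma \ref{l:slowgrowth}, and depending explicitly on $\norm{\pn(0)}_{L^2}$ and $C_1$) to make each contribution strictly smaller than $(e^{-1}-5e^{-4})/3\cdot\norm{\pn(s)}_{L^2}$. The main technical subtlety will be the mixed term involving $\psi$: one must interpolate both $\norm{\nabla \pn}_{L^4}$ and $\norm{\psi}_{L^4_y}$ via the Gagliardo-Nirenberg inequalities \eqref{e:soblev1}, and then split the overall factor of $\nu$ in such a way that $\nu\int\norm{\Delta \pn}_{L^2}^2$ and $\nu\int\norm{\pa_y^2 \psi}_{L^2_y}^2$ appear with fractional exponents summing correctly to produce a positive power of $\nu/\lambda_\nu$. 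Once this bookkeeping is in place, the desired bound $\norm{\pn(s+\tau^*)}_{L^2}\leq e^{-1}\norm{\pn(s)}_{L^2}$ follows.
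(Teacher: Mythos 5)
Your proposal follows essentially the same route as the paper: Duhamel on $[s,s+\tau^*]$, the semigroup bound $5\e^{-4}\leq \e^{-2}$ for the linear part, H\"older in time on the three nonlinear integrals combined with \ref{i:bootstrap1}--\ref{i:bootstrap2} and Lemma \ref{l:linftyl2}, and then shrinking $\nu_0$ using $\nu/\lambda_\nu\to 0$. The only discrepancy is cosmetic: for the mixed $\psi$-term the natural H\"older split ($3/8+1/2+1/8$) leaves a power $(\nu\tau^*)^{1/8}$ rather than your claimed $9/16$, but since any positive exponent suffices, the argument is unaffected.
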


\begin{proof}
In the course of this proof, we assume that $\nu_0$ is small enough so that Lemma \ref{lem:B2} can be applied.
By the definition of $\tau^*$, we have
\begin{align}
\norm{\cS_{\tau^*}(\pn(s))}_{L^2}\leq \frac{5}{\e^4}\norm{\pn(s)}_{L^2}\leq \frac{1}{\e^2}\norm{\pn(s)}_{L^2}\,.
\end{align}
Using this inequality in~\eqref{e:duhamel} yields 
\begin{align}
\norm{\pn(\tau^*+s)}_{L^2}&\leq \frac{1}{\e^2}\norm{\pn(s)}_{L^2}+C\nu\int_s^{\tau^*+s}\big(\norm{\pn}_{L^2}^{1/2}\norm{\Delta\pn}_{L^2}^{3/2}+\norm{\Delta\pn}_{L^2}+\norm{\pn}_{L^2}^{1/4}\norm{\Delta\pn}_{L^2}^{3/4}\norm{\pa^2_y\psi}_{L^2_y}\big)\,\dd \tau \nonumber\\
&\leq  \frac{1}{\e^2}\norm{\pn(s)}_{L^2}+C\Big(\nu\int_s^{\tau^*+s}\norm{\pn(\tau)}_{L^2}^2\,\dd \tau\Big)^{1/4}\Big(\nu\int_s^{\tau^*+s}\norm{\Delta\pn(\tau)}_{L^2}^2\,\dd \tau\Big)^{3/4}\nonumber\\
&\qquad +C\Big(\nu\int_s^{\tau^*+s}\norm{\Delta\pn(\tau)}_{L^2}^2\,\dd \tau\Big)^{1/2}(\nu\tau^*)^{1/2}\nonumber\\
&\qquad+C\Big(\nu\int_s^{\tau^*+s}\norm{\Delta\pn(\tau)}_{L^2}^2\,\dd \tau\Big)^{3/8}
\Big(\nu\int_s^{\tau^*+s}\norm{\pa^2_y\psi(\tau)}_{L^2}^2\,\dd \tau\Big)^{1/2}\Big(\nu\int_s^{\tau^*+s}\norm{\pn(\tau)}_{L^2}^2\,\dd \tau\Big)^{1/8}\,.
\end{align}
Using the bootstrap assumptions \ref{i:bootstrap1}-\ref{i:bootstrap2}   and Lemma~\ref{l:linftyl2}, it then follows that
\begin{align}\label{e:ldecaytmp}
\norm{\pn(\tau^*+s)}_{L^2}&\leq \frac{1}{\e^2}\norm{\pn(s)}_{L^2}+C(\nu\tau^*)^{1/4}\norm{\pn(s)}_{L^2}^2+C(\nu\tau^*)^{1/2}\norm{\pn(s)}_{L^2}+C\sqrt{C_1}(\nu\tau^*)^{1/8}\norm{\pn(s)}_{L^2}\nonumber\\
&\leq \frac{1}{\e^2}\norm{\pn(s)}_{L^2}+C(\nu\tau^*)^{1/8}\big(\norm{\pn(s)}_{L^2}+\sqrt{C_1}\big)\norm{\pn(s)}_{L^2}\,,
\end{align}
where we used the fact that $\nu\tau^*\ll 1$ when $\nu_0 $ is small enough. By further restring $\nu_0$ so that
 \begin{align}\label{e:nu03}
 \frac{1}{\e^2} +C(4\nu_0\lambda_{\nu_0}^{-1})^{1/8}\big(8\norm{\pn(0)}_{L^2}+\sqrt{C_1}\big)\leq \frac{1}{\e},
  \end{align}
 the desired result follows from~\eqref{e:ldecaytmp}.
\end{proof}

Now we are ready to show that the bootstrap assumption~\ref{i:bootstrap1} can be refined.
\begin{lemma}\label{lem:B1}
Let $0<L_2<2\pi$. Assume the bootstrap assumptions~\ref{i:bootstrap1} and~\ref{i:bootstrap2}. There exists $\nu_0=\nu_0(\norm{\pn(0)}_{L^2})$, explicitly computable, with the following property: for any $0\leq s\leq t\leq t_0$ and for any $\nu\leq \nu_0$,
 it holds that 
\begin{align}
\norm{\pn(t)}_{L^2}\leq 4\e^{-\lambda_\nu (t-s)/4}\norm{\pn(s)}_{L^2}\,.
\end{align}
In particular, \ref{i:bootstrap3} holds.
\end{lemma}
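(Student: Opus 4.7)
The plan is to combine the two previous lemmas, using Lemma~\ref{l:decaytau} to obtain exponential decay along a discrete sequence of times spaced by $\tau^*=4/\lambda_\nu$, and using Lemma~\ref{l:slowgrowth} to interpolate across the remaining sub-interval. Throughout we take $\nu_0$ small enough that the conclusions of Lemma~\ref{lem:B2}, Lemma~\ref{l:slowgrowth}, and Lemma~\ref{l:decaytau} all hold simultaneously on $[0,t_0]$.

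Fix $0\leq s\leq t\leq t_0$. Write $t-s=n\tau^*+r$ where $n\in\N\cup\{0\}$ and $0\leq r<\tau^*$. If $n\geq 1$, we iterate Lemma~\ref{l:decaytau} starting from time $s$: for $k=0,1,\dots,n-1$, provided $s+(k+1)\tau^*\leq t_0$, we have
\[
\norm{\pn(s+(k+1)\tau^*)}_{L^2}\leq \e^{-1}\norm{\pn(s+k\tau^*)}_{L^2},
\]
and hence by induction $\norm{\pn(s+n\tau^*)}_{L^2}\leq \e^{-n}\norm{\pn(s)}_{L^2}$. Then, since the remaining interval $[s+n\tau^*,t]$ has length $r<\tau^*$ and is contained in $[0,t_0]$, Lemma~\ref{l:slowgrowth} applied with $t_1=s+n\tau^*$ yields
\[
\norm{\pn(t)}_{L^2}\leq \sqrt{2}\,\norm{\pn(s+n\tau^*)}_{L^2}\leq \sqrt{2}\,\e^{-n}\norm{\pn(s)}_{L^2}.
\]
The same bound holds trivially when $n=0$ by a single application of Lemma~\ref{l:slowgrowth}.

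To convert the discrete decay factor $\e^{-n}$ into the claimed continuous rate, observe that $t-s\leq (n+1)\tau^*=4(n+1)/\lambda_\nu$, so $\lambda_\nu(t-s)/4\leq n+1$, which gives $\e^{-n}\leq \e\cdot \e^{-\lambda_\nu(t-s)/4}$. Substituting this in the previous display yields
\[
\norm{\pn(t)}_{L^2}\leq \sqrt{2}\,\e\cdot \e^{-\lambda_\nu(t-s)/4}\norm{\pn(s)}_{L^2}\leq 4\,\e^{-\lambda_\nu(t-s)/4}\norm{\pn(s)}_{L^2},
\]
since $\sqrt{2}\,\e<4$. This is precisely \ref{i:bootstrap3}.

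The main (mild) subtlety is the interplay between the three choices of $\nu_0$ coming from Lemma~\ref{lem:B2}, Lemma~\ref{l:slowgrowth}, and Lemma~\ref{l:decaytau}: each of them is explicit and depends only on $\norm{\pn(0)}_{L^2}$ (through the constants $C_1,C_2$ and the inequality \eqref{e:nu03}), so it suffices to pick $\nu_0$ to be the minimum of the three thresholds. Once this is done, the decay-plus-slow-growth dichotomy above is entirely mechanical, and no new estimate on the nonlinearity is needed. Having established \ref{i:bootstrap3}, together with Lemma~\ref{lem:B2} we conclude that the bootstrap assumptions can be strictly improved on $[0,t_0]$, which by the standard continuation argument used in the proof of Theorem~\ref{t:main} forces $t_0=\infty$.
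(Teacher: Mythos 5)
Your proposal is correct and follows essentially the same route as the paper: iterate Lemma~\ref{l:decaytau} over intervals of length $\tau^*=4/\lambda_\nu$ to get the discrete decay $\e^{-n}$, apply Lemma~\ref{l:slowgrowth} on the leftover sub-interval to pick up the factor $\sqrt{2}$, and absorb everything using $\sqrt{2}\,\e<4$, with $\nu_0$ taken as the minimum of the thresholds from the preceding lemmas. No substantive differences.
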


\begin{proof}
We fix $\nu_0 =\nu_0(\norm{\pn(0)}_{L^2})$ so that all the restrictions in Lemmata  \ref{lem:B2}-\ref{l:decaytau} are fulfilled. If $t_0<\tau^*$ then \ref{i:bootstrap3} directly follows by Lemma \ref{l:slowgrowth} since $\sqrt{2}\e<4$. When $t_0\geq \tau^*$, by Lemma~\ref{l:decaytau}, we have
\begin{align}
\norm{\pn(n\tau^*+s)}_{L^2}\leq \e^{-n}\norm{\pn(s)}_{L^2}\,,\quad\text{for any $n\in \ZZ_+$ satisfying $s+n\tau^*\leq t_0$.}
\end{align}
For any $0\leq s\leq t\leq t_0$, there exists $n$ such that $t\in[n\tau^*+s,~(n+1)\tau^*+s)$. From Lemma~\ref{l:slowgrowth} with $t_1=n\tau^*+s$, it follows that
\begin{align}
\norm{\pn(t)}_{L^2}\leq \sqrt 2 \norm{\pn(n\tau^*+s)}_{L^2}\leq \sqrt 2 \e^{-n}\norm{\pn(s)}_{L^2}\leq \sqrt 2 \e^{1-(t-s)/\tau^*}\norm{\pn(s)}_{L^2}\leq 4\e^{-\lambda_\nu (t-s)/4}\norm{\pn(s)}_{L^2}\,.
\end{align}
This concludes the proof of the lemma.
\end{proof}

\section{Semigroup estimates} \label{s:semigroup}

In this section, we prove Proposition \ref{p:semest}, namely an improved decay estimate for the semigroup generated by $H_\nu$ in $L^2$, under a general condition on the shear velocity profile $u$.

We denote by $\rL^2(\TT^2)$ the closed subspace of $L^2(\TT^2)$ of elements for which $\gz=0$.
By Fubini-Tonelli's Theorem, such elements are also mean-zero on the torus. We will be concerned with the restriction of the operator  $H_\nu$ to $\rL^2(\TT^2)$ viewed as an unbounded operator.  By slight abuse of notation, we denote the restriction also by $H_\nu$. It is straightforward to check that the projection onto
$\rL^2(\TT^2)$ commutes with the semigroup $\e^{-tH_\nu}$ generated by $H_\nu$.

Let $(X,\|\cdot\|)$ be a complex Hilbert space and let $H$ be a closed, densely defined operator on $X$.
As shown in \cite{Wei18}, if $H$ is an $m$-accretive operator on $X$, then the decay properties of the semigroup $\e^{-t H}$ can be understood in terms of the following quantity:
\begin{equation}
\Psi(H)=\inf\left\{\|(H-i\lambda)g\|: g\in D(H),\,\lambda\in \R,\, \|g\|=1\right\},
\end{equation}
is related to the \textit{pseudospectral} properties of the operator \cite{GGN09}.
Following \cite{Wei18}, for $\frac{L_1k}{2\pi}\in  \ZZ_*$ and $\nu\in(0,1]$, we consider the operator $H_\nu$ localized to the $k$th Fourier mode in the direction of the shear, namely, the operator
\begin{equation}
H_{\nu,k}=\nu \Delta_k^2 + iku(y),\qquad \Delta_k:=-k^2+\de_{yy}.
\end{equation}
Following the arguments in \cite{Wei18} for the Laplace operator, it can be shown that $H_{nu,k}$ an $m$-accretive operator on $L^2(\TT^1)$ with domain $H^4(\TT^1)$. Here, $L^2$ is a space of complex-valued functions.
Then, as a consequence of \cite{Wei18}*{Theorem 1.3},
\begin{equation}
	\label{bd:Wei}
\|\e^{-H_{\nu,k} t}\|_\op \leq \e^{-t\Psi(H_{\nu,k})+\pi/2}, \qquad \forall t\geq 0,
\end{equation}
where $\|\cdot\|_\op$ denotes the operator norm.
To establish lower bounds on $\Psi(H_{\nu,k})$, we  assume the following condition on the shear flow.

\begin{assumption}\label{a:lowerbddE}
There exist $m, N\in \N$, $c_1>0$ and  $\delta_0\in(0,L_2)$ with the property that, for
  any $\lambda \in \R$ and any $\delta\in(0,\delta_0)$, there exist $n\leq N$ and points $y_1,\ldots y_n\in [0,L_2)$
such that
\begin{align}\label{eq:lowerbddE}
|u(y)-\lambda|\geq c_1 \left(\frac{\delta}{L_2}\right)^m, \qquad \forall \  |y-y_j|\geq \delta, \quad \forall j\in \{1,\ldots n\}.
\end{align}
\end{assumption}


\begin{remark}
Assumption \ref{a:lowerbddE} is heavily inspired by a similar property of the velocity field associated to the Oseen's vortex \cite{LWZ20}. In \cite{gallay19}, Gallay previously observed that the method of proof in \cite{LWZ20} can be extended to more general shear flows assuming a condition similar to \eqref{eq:lowerbddE}.
\end{remark}

The following  is the main result of this section.

\begin{proposition} \label{t:SemigroupEst}
Let $u$ satisfy Assumption \ref{a:lowerbddE}.  Assume $k\ne 0$ and $\nu|k|^{-1}\leq 1$. There exists a constant $\eps'_0>0$, independent of $\nu$ and $k$, such that
\begin{align}
	\label{bd:pseudo}
\Psi (H_{\nu,k})\geq \eps'_0 \nu^\frac{m}{m+4}|k|^\frac{4}{m+4}.
\end{align}
\end{proposition}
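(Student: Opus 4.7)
The strategy is to derive a lower bound on $\|f\|_{L^2(\T^1)}$, where $f := (H_{\nu,k} - i\lambda)g$ for arbitrary $\lambda \in \R$ and unit-norm $g \in D(H_{\nu,k})$, by combining two standard energy identities with the spatial localization supplied by Assumption~\ref{a:lowerbddE}. This is a pseudospectral estimate in the spirit of \cite{Wei18, gallay19}, so the proof is direct rather than by contradiction.

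First, I would pair $f$ with $g$ in $L^2(\T^1)$, integrate by parts under periodicity, and separate real and imaginary parts. With $c := \lambda/k$, the real part gives the hyper-dissipation identity
\[
\nu\|\Delta_k g\|_{L^2}^2 = \mathrm{Re}\langle f,g\rangle \leq \|f\|_{L^2},
\]
which upon expanding $\|\Delta_k g\|^2 = k^4\|g\|^2 + 2k^2\|g'\|^2 + \|g''\|^2$ yields in particular $\|g''\|_{L^2}^2 \leq \|f\|/\nu$. The imaginary part gives the weighted bound
\[
|k|\left|\int_{\T^1}(u(y)-c)|g|^2\,\dd y\right| \leq \|f\|_{L^2}.
\]
A standard 1D interpolation (Agmon on $\T^1$ combined with $\|g'\|^2 \leq \|g\|\|g''\|$), applied to a unit-norm $g$, then yields
\[
\|g\|_{L^\infty}^2 \leq C\bigl(1 + \|g''\|_{L^2}^{1/2}\bigr) \leq C\bigl(1 + (\|f\|/\nu)^{1/4}\bigr).
\]

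Next, for any $\delta \in (0,\delta_0)$, I would invoke Assumption~\ref{a:lowerbddE} with $\lambda$ replaced by $c$ to produce a bad set $B_\delta := \bigcup_{j=1}^n [y_j-\delta, y_j+\delta]$ of Lebesgue measure at most $2N\delta$, on whose complement $|u(y)-c| \geq c_1(\delta/L_2)^m$. Splitting $1 = \|g\|^2$ across $B_\delta$ and its complement then produces the master inequality
\[
1 \leq 2N\delta\,\|g\|_{L^\infty}^2 + \frac{L_2^m}{c_1\,\delta^m}\cdot\frac{\|f\|_{L^2}}{|k|}.
\]
Substituting the $L^\infty$ bound and optimizing in $\delta$, the balance $\delta(\|f\|/\nu)^{1/4} \sim \delta^{-m}\|f\|/|k|$ leads to the choice $\delta^{m+1} \sim \|f\|^{3/4}\nu^{1/4}/|k|$, and both terms then equal a multiple of $\|f\|^{(m+4)/(4(m+1))}\nu^{-m/(4(m+1))}|k|^{-1/(m+1)}$. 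Requiring this quantity to be $\gtrsim 1$ yields exactly $\|f\|_{L^2} \gtrsim \nu^{m/(m+4)}|k|^{4/(m+4)}$, which is \eqref{bd:pseudo}.

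The main obstacle is disentangling two boundary regimes in this optimization. First, one must verify that the optimal $\delta$ lies in $(0,\delta_0)$; this may fail only when $\|f\|$ is so large that the target lower bound is already trivial, and can be absorbed into the choice of $\varepsilon'_0$. Second, when $\|f\|/\nu \leq 1$ the ``1'' in the $L^\infty$ bound dominates $(\|f\|/\nu)^{1/4}$, and the optimized splitting instead forces the stronger conclusion $\|f\| \gtrsim |k|$. The hypothesis $\nu |k|^{-1} \leq 1$ saves the day here: it gives $|k| \geq \nu^{m/(m+4)}|k|^{4/(m+4)}$, so this stronger conclusion still implies \eqref{bd:pseudo}. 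Combining the two regimes yields the uniform lower bound on $\Psi(H_{\nu,k})$ with a constant $\varepsilon'_0$ that depends only on $c_1$, $L_2$, $\delta_0$, $N$, and $m$, as required.
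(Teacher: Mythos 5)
There is a genuine gap at the heart of your argument: the passage from the imaginary-part identity to the ``master inequality.'' Pairing $f=(H_{\nu,k}-i\lambda)g$ with $g$ itself gives only the \emph{signed} bound
\[
|k|\,\Bigl|\int_{\T^1}(u(y)-c)\,|g(y)|^2\,\mathrm{d}y\Bigr|\le \|f\|_{L^2}\|g\|_{L^2},
\]
whereas your splitting step requires an \emph{upper} bound on $\int_E |u(y)-c|\,|g(y)|^2\,\mathrm{d}y$ in order to invoke $|u-c|\ge c_1(\delta/L_2)^m$ on the good set $E$. Since $u-c$ changes sign in general (e.g.\ $u=\sin^m$ with $c\in(0,1)$), the signed integral can be nearly zero by cancellation while the unsigned one is of order one, so the inequality $1\le 2N\delta\|g\|_{L^\infty}^2+c_1^{-1}(L_2/\delta)^m\|f\|_{L^2}/|k|$ does not follow from what you have established. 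This is precisely why the paper (following Wei and Li--Wei--Zhang/Gallay) tests the equation against $\chi g$, where $\chi$ is a smooth approximation of $\mathrm{sign}(u-\lambda)$ with $\|\chi'\|_{L^\infty}\lesssim\delta^{-1}$ and $\|\chi''\|_{L^\infty}\lesssim\delta^{-2}$: then $\Im\langle f,\chi g\rangle$ produces the unsigned quantity $k\langle(u-\lambda)g,\chi g\rangle\ge c_1(\delta/L_2)^m|k|\int_E|g|^2$, at the cost of the commutator terms $\nu\,\Im\langle\Delta_k g,\chi'' g\rangle$ and $2\nu\,\Im\langle\Delta_k g,\chi'\partial_y g\rangle$, bounded by $c_2\nu\delta^{-2}\|\Delta_k g\|\|g\|+c_2\nu\delta^{-1}\|\Delta_k g\|^{3/2}\|g\|^{1/2}$, which must then be carried through the optimization.

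The rest of your skeleton is sound and matches the paper: the real-part identity $\nu\|\Delta_k g\|^2=\Re\langle f,g\rangle$, the Agmon-type control of $\int_{E^c}|g|^2$ by $N\delta\|g\|_{L^\infty}^2$, and the optimization in $\delta$; your handling of the boundary regimes via the hypothesis $\nu|k|^{-1}\le 1$ is also reasonable. Reassuringly, at the paper's choice $\delta/L_2\sim(\nu/|k|)^{1/(m+4)}$ the commutator contributions turn out to be subdominant to the two terms you balanced, so the exponent $\nu^{m/(m+4)}|k|^{4/(m+4)}$ survives; but the $\chi$-multiplier (or an equivalent device to defeat the sign cancellation) is an essential missing ingredient, not a cosmetic one.
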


We state next a direct consequence of the theorem.

\begin{corollary} \label{c:SemigroupEst}
In the hypotheses of Proposition \ref{t:SemigroupEst}, let $P_k$ denotes the $L^2$ projection onto the $k$-th Fourier mode in the horizontal direction. Then, for every $t\geq 0$,
\begin{align} \label{eq:DecayRate}
\|\e^{-H_\nu t} \, P_k\|_\op \leq \e^{- \eps'_0 \nu^\frac{m}{m+4}|k|^\frac{4}{m+4} t +\pi/2}.
\end{align}
In particular, $H_\nu$ generates an exponentially stable semigroup in $\rL^2(\TT^2)$ with rate:
\begin{equation}\label{bd:sembound}
   \|\e^{-H_\nu t}\|_\op \leq  \e^{-\lambda'_\nu t +\pi/2},  \qquad t>0,
\end{equation}
where     $\lambda'_\nu=\eps'_0 \nu^\frac{m}{m+4}$ for some $\eps_0'>0$.
\end{corollary}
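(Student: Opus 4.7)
The plan is to derive the corollary as a direct consequence of Proposition \ref{t:SemigroupEst} combined with the abstract pseudospectral bound \eqref{bd:Wei}, by first working mode-by-mode in the horizontal Fourier variable and then reassembling via Parseval.

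\medskip
\textbf{Step 1: Commutation and reduction to $H_{\nu,k}$.} First I would note that both $\nu\Delta^2$ and the multiplication/differentiation operator $u(y)\pa_x$ are diagonal with respect to the horizontal Fourier decomposition, hence $P_k$ commutes with $H_\nu$ and therefore with the semigroup $\e^{-tH_\nu}$. On the range of $P_k$, identifying functions with their $y$-dependent Fourier coefficient times $\e^{ikx}$, the action of $H_\nu$ is precisely that of $H_{\nu,k}=\nu\Delta_k^2+iku(y)$ on $L^2(\TT^1)$. Therefore
\[
\|\e^{-tH_\nu}P_k g\|_{L^2(\TT^2)}=\|\e^{-tH_{\nu,k}}\widehat{g}(k,\cdot)\|_{L^2(\TT^1)},
\]
where $\widehat{g}(k,\cdot)$ denotes the $k$-th Fourier coefficient of $g$ in $x$.

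\medskip
\textbf{Step 2: Applying \eqref{bd:Wei} mode by mode.} Since $H_{\nu,k}$ is m-accretive on $L^2(\TT^1)$ with domain $H^4(\TT^1)$, the bound \eqref{bd:Wei} gives $\|\e^{-tH_{\nu,k}}\|_{\op}\leq \e^{-t\Psi(H_{\nu,k})+\pi/2}$. For $\nu\leq\nu_0$ sufficiently small (specifically $\nu_0\leq 2\pi/L_1$), every admissible nonzero wavenumber $k=2\pi n/L_1$, $n\in \ZZ_*$, satisfies $\nu|k|^{-1}\leq 1$, so Proposition \ref{t:SemigroupEst} applies and yields
\[
\Psi(H_{\nu,k})\geq \eps'_0\,\nu^{\frac{m}{m+4}}|k|^{\frac{4}{m+4}}.
\]
Combining the two bounds establishes \eqref{eq:DecayRate}.

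\medskip
\textbf{Step 3: Assembly on $\rL^2(\TT^2)$.} For $g\in\rL^2(\TT^2)$, Parseval in $x$ gives $g=\sum_{k\neq 0}P_k g$ and $\|g\|_{L^2}^2=\sum_{k\neq 0}\|P_k g\|_{L^2}^2$. Since $P_k$ commutes with $\e^{-tH_\nu}$,
\[
\|\e^{-tH_\nu}g\|_{L^2}^2=\sum_{k\neq 0}\|\e^{-tH_\nu}P_k g\|_{L^2}^2\leq \e^{\pi}\sum_{k\neq 0}\e^{-2t\Psi(H_{\nu,k})}\|P_k g\|_{L^2}^2.
\]
The function $k\mapsto |k|^{4/(m+4)}$ attains its minimum over admissible $k\neq 0$ at $|k|=2\pi/L_1$, hence $\Psi(H_{\nu,k})\geq \lambda'_\nu$ with $\lambda'_\nu=\eps'_0(2\pi/L_1)^{4/(m+4)}\nu^{m/(m+4)}$, a constant which can be reabsorbed into a redefined $\eps'_0$. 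Factoring out the uniform exponential and using Parseval once more yields \eqref{bd:sembound}.

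\medskip
The argument is essentially bookkeeping once Proposition \ref{t:SemigroupEst} is in hand; the only mild subtlety is ensuring that the restriction $\nu|k|^{-1}\leq 1$ in the Proposition is satisfied uniformly in $k\neq 0$, which is guaranteed by taking $\nu$ smaller than the fundamental horizontal wavenumber $2\pi/L_1$.
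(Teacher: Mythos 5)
Your proposal is correct and is exactly the argument the paper intends: the corollary is stated without proof as a ``direct consequence'' of Proposition \ref{t:SemigroupEst} via the Gearhart--Pr\"uss bound \eqref{bd:Wei}, and your Fourier-mode reduction, application of \eqref{bd:Wei} together with the pseudospectral lower bound, and Parseval reassembly (with the decay rate controlled by the smallest admissible wavenumber $|k|=2\pi/L_1$) is the standard way to fill that in. Your observation that one needs $\nu\le 2\pi/L_1$ so that the hypothesis $\nu|k|^{-1}\le 1$ holds uniformly over all nonzero modes is a worthwhile precision that the paper leaves implicit.
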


Before proving Proposition \ref{t:SemigroupEst}, we show that the Assumption \ref{a:lowerbddE} is satisfied for  $u$ as in \eqref{def:sinym}.

\begin{example} 
We consider the case of $u(y)=(\sin(y))^m$, defined on  $[0,2\pi)$. For a general period $L_2$, the result follows by a standard rescaling argument. Without loss of generality, we may assume that  $\delta_0>0$ is small enough
so that $\cos (\delta_0)\geq 1/2$. In particular, for every $C\geq 1$,
\begin{align} \label{eq:delta0choice}
\sin (\delta/C)\geq \frac{\delta}{2C}, \qquad \forall \delta\in (0,\delta_0).
\end{align}
Given $\lambda \in \mathbb{R}$, we choose the set of points $y_1,\dots,y_n$ to be the union of the set of the critical points of $u$ with the set $u^{-1}(\{\lambda\})$ (possibly empty). More precisely, we consider
\begin{align}
\label{def:Y}
Y:=\begin{cases}\{\pi/2,3\pi/2\}\cup u^{-1}(\{\lambda\}), \quad &m=1,\\  
\{0,\pi/2,\pi,3\pi/2\}\cup u^{-1}(\{\lambda\}), \quad &m\geq 2,
\end{cases} \qquad n:=|Y|.
\end{align}
There are at most $N=8$ points in $Y$. Observe that the critical points at $y=\pi/2, 3\pi/2$ are such that $u''(\pi/2),u''(3\pi/2)\neq0$ for any $m\geq 1$. On the other hand, for $m\geq 2$ the critical points  at $y=0,\pi$ are such that $u^{(j)}(0)=u^{(j)}(\pi)=0$ for every $j=1,\ldots,m-1$ and $u^{(m)}(0),u^{(m)}(\pi)\neq 0$. We denote $Y=\{y_{i}\}_{i=1}^n$ and we order the points in such a way that $0\leq y_i\leq y_{i+1}\leq 2\pi$ for $i=1,\ldots,N$. We fix $\delta\in (0,\delta_0)$ and consider different cases.

\medskip

\noindent {\bf Case  $\lambda\in[0,1]$.} 
Denote by $y_\lambda$ the smallest element of $u^{-1}(\{\lambda\})$. Due to the symmetries of $u$, we know that $y_\lambda \in[0,\pi/2]$. We consider three situations.

\begin{itemize}[leftmargin=*]
 \item $|y_\lambda|<\delta/4$: In this case, thanks to \eqref{eq:delta0choice}, we have
\begin{align}
|u(y)-\lambda|&=|(\sin (y))^m-(\sin(y_\lambda))^m|
\geq  |\sin (\delta)|^m-|\sin(y_\lambda)|^m\notag\\
&\geq \frac{\delta^m}{2^m} - |\sin(\delta/4)|^m\geq \frac{\delta^m}{2^m} -\frac{\delta^m}{4^m}\geq  \frac{\delta^m}{2^{m+1}},
\end{align}
where we used that $|\sin(y)|\leq |y|$.

\item $|y_\lambda-\pi/2|<\delta/4$: In this case we have
\begin{align}
|u(y)-\lambda|&=|(\sin (y))^m-(\sin(y_\lambda))^m|\geq |(\sin (\pi/2-\delta))^m-(\sin(y_\lambda))^m|\notag\\
&\geq  |(\sin (\pi/2-\delta)))^m-(\sin(\pi/2)))^m|- |(\sin (\pi/2)))^m-(\sin(\pi/2-\delta/4)))^m|.
\end{align}
Since $\de_y (\sin(y))^m= m(\sin(y))^{m-1}\cos(y)$ and
$\de^2_y (\sin(y))^m= m(m-1)(\sin(y))^{m-2}\cos^2 (y)-m(\sin(y))^m$,
\begin{align}
 |(\sin (\pi/2-\delta)))^m-(\sin(\pi/2)))^m|=\left|\left(1-\frac{\delta^2}{2}+O(\delta^4)\right)^m-1\right|=\frac{m}{2}\delta^2+O(\delta^4).
\end{align}
so that, by possibly restricting the size of  $\delta_0$ further (depending on $m$),  we also have
\begin{align}
 |(\sin (\pi/2-\delta)))^m-(\sin(\pi/2)))^m|\geq \frac{m}{4}\delta^2.
\end{align}
On the other hand,
\begin{equation}
 |(\sin (\pi/2)))^m-(\sin(\pi/2-\delta/4)))^m|\leq \frac{m}{8}\delta^2.
\end{equation}
Consequently,
\begin{align}
|u(y)-\lambda|\geq \frac{m}{8}\delta^2.
\end{align}

\smallskip

\item  $|y_\lambda|\geq \delta/4$ and $|y_\lambda-\pi/2|\geq\delta/4$: In this case,
we have
\begin{align}
|u(y)-\lambda|&=|(\sin (y))^m-(\sin(y_\lambda))^m|\notag\\
&\geq \min\left\{ |(\sin (y_\lambda+\delta/8))^m-(\sin(y_\lambda))^m|,|(\sin (y_\lambda-\delta/8))^m-(\sin(y_\lambda))^m|\right\}.
\end{align}
We observe that $y_\lambda+\delta/8<\pi/2-\delta/8$ and $y_\lambda-\delta/8>\delta/8$. Then if $y_\lambda\geq \pi/4$,
it follows by the Mean Value Theorem that  for some $\xi\in (y_\lambda,y_\lambda+\delta/8)$,
\begin{align}
|(\sin (y_\lambda+\delta/8))^m-(\sin(y_\lambda))^m|&=m|(\sin(\xi))^{m-1}\cos(\xi)|\frac{\delta}{8}\notag\\
&\geq m|(\sin(\pi/4))^{m-1}\cos(\pi/2-\delta/8)|\frac{\delta}{8}\notag\\
&\geq \frac{m}{2^{\frac{m-1}{2}}}|\sin(\delta/8)|\frac{\delta}{8}\geq\frac{m}{2^{\frac{m-1}{2}}}\frac{\delta^2}{128},
\end{align}
also using  \eqref{eq:delta0choice}. 
Otherwise, if  $y_\lambda\leq \pi/4$, we have
\begin{align}
|(\sin(y_\lambda+\delta/8))^m-(\sin(y_\lambda))^m|&=m|(\sin(\xi))^{m-1}\cos(\xi)|\frac{\delta}{8}\notag\\
&\geq m|(\sin(\delta/4))^{m-1}\cos(\pi/4+\delta/8)|\frac{\delta}{8}\notag\\
&\geq m|(\sin(\delta/4))^{m-1}\cos(\pi/3)|\frac{\delta}{8}\geq \frac{m}{8^{m+1}}\delta^{m}.
\end{align}
A lower bound for $|(\sin (y_\lambda-\delta/8)))^m-(\sin(y_\lambda)))^m|$ can be proved in a similar way. 
\end{itemize}

\medskip

\noindent{\bf Case  $\lambda>1$.} In this case $u^{-1}(\{\lambda\})=\emptyset$. Hence, since ${\rm dist}(y,Y)\geq \delta$, we simply have
\begin{align}
|u(y)-\lambda|&\geq|(\sin(y))^m-(\sin(\pi/2))^m|\geq  |(\sin (\pi/2-\delta))^m-(\sin(\pi/2))^m|\notag\\
&\geq m|(\sin(\xi))^{m-1}\cos(\xi)| \delta\geq m|(\sin(\pi/6))^{m-1}\cos(\pi/2-\delta)| \delta\geq \frac{m}{2^{m}} \delta^2,
\end{align}
where we used \eqref{eq:delta0choice} in the last inequality.

\medskip

\noindent{\bf Case  $\lambda<0$.}
This case is only relevant for $m$ even. In fact,  for $m$ odd we proceed as for the case $\lambda\in[0,1]$ by symmetry.
Here, again $u^{-1}(\{\lambda\})=\emptyset$. Hence, since ${\rm dist}(y,Y)\geq \delta$, we have
\begin{align}
|u(y)-\lambda|&\geq|(\sin (y))^m|\geq  (\sin (\delta))^m\geq   \frac{\delta^m}{2^{m}},
\end{align}
by \eqref{eq:delta0choice}. 

Hence, there exists a constant $c_m>0$ such that
\begin{align}
|u(y)-\lambda|\geq c_m \delta^{\max\{m,2\}},
\end{align}
as we wanted.

\end{example}

We now turn our attention to the proof of Proposition \ref{t:SemigroupEst}.

\begin{proof}[Proof of Proposition \ref{t:SemigroupEst}]
The theorem follows by establishing a lower bound on $\Psi(H_{\nu,k})$. In the following, $\|\cdot\|$ denotes the $L^2$ norm and $\langle \cdot,\cdot \rangle$ denotes the Hermitian inner
product in $L^2$.

We fix $\lambda\in\R$ and pick  $g\in D(H_{\nu,k})$ with $\|g\|=1$. For notational convenience, we set
\[
   H:=H_{\nu,k}-i \lambda=\nu \Delta_k^2+i k(u(y)-\lambda).
\]
 Let $\chi:[0,L_2)\to[-1,1]$ be a smooth approximation of $\sign(u(y)-\lambda)$  such that $\|\chi'\|_{L^\infty}\leq c_2\delta^{-1}$,
$\|\chi''\|_{L^\infty}\leq c_2\delta^{-2}$, $\chi (u-\lambda)\geq 0$ and
\begin{align}
\chi(y)(u(y)-\lambda)=|u(y)-\lambda|, \quad \text{whenever} \quad |y-y_j|\geq \delta,\quad \forall j\in \{1,\ldots n\},
\end{align}
where $y_j$ are the points in Assumption \ref{a:lowerbddE}. The function $\chi$ can be constructed via a standard mollification argument. 
A double integration by parts in $y$ yields the identity
\begin{align}
\Re\l H g,g\r=\nu \|\Delta_k g\|^2,
\end{align}
which implies that
\begin{align}\label{eq:repartcons}
 \|\Delta_kg\|^2\leq \frac{1}{\nu}\| H g\|\|g\|.
\end{align}
On the other hand, we have
\begin{align}
\l H g, \chi g\r&= \nu\l \Delta_k^2  g, \chi g\r +ik \l   (u(y)-\lambda) g, \chi g\r \notag\\
&= \nu\l \Delta_k g, \chi'' g\r +2\nu\l  \Delta_k g, \chi' \de_y g\r +\nu\l  \Delta_k g, \chi \Delta_k  g\r +ik \l   (u(y)-\lambda) g, \chi g\r
\end{align}
so that
\begin{align}
\Im\l H g,\chi g\r=\nu \Im\l \Delta_k  g, \chi'' g\r +2\nu \Im \l\Delta_k g, \chi'\de_yg\r +k \l   (u(y)-\lambda) g, \chi g\r.
\end{align}
In particular, from the properties of the function $\chi$ and the interpolation inequality $ \|\de_yg\|^2\leq\|\Delta_k  g \|\|g\|$, it follows that
\begin{align}\label{eq:impartcons}
|k| \l   (u(y)-\lambda) g, \chi g\r\leq \| Hg\| \|g\|+\frac{c_2\nu}{\delta^2} \|\Delta_k  g \|\|g\|+\frac{c_2\nu}{\delta}  \|\Delta_k  g \|^{3/2}\|g\|^{1/2}.
\end{align}
 We denote
\begin{align}
  E:=\left\{ y\in [0,L_2): |y-y_j|\geq \delta, \quad \text{for } j=1,\ldots,n\right\},
\end{align}
where $y_j$ are the points in Assumption \ref{a:lowerbddE}. By \eqref{eq:lowerbddE} we have
\begin{align}
 \l   (u(y)-\lambda) g, \chi g\r\geq \int_E |u(y)-\lambda| |g(y)|^2\dd y\geq c_1 \left(\frac{\delta}{L_2}\right)^m  \int_E |g(y)|^2\dd y.
\end{align}
Utilizing \eqref{eq:repartcons} and \eqref{eq:impartcons}, we find that there exists a positive constant $\tilde{c}_2$ such that
\begin{align}\label{eq:estimateE}
&\int_E |g(y)|^2\dd y\leq \frac{1}{c_1 |k|} \left(\frac{L_2}{\delta}\right)^{m}\left[\| Hg\| \|g\|+\frac{c_2\nu}{\delta^2} \|\Delta_k  g \|\|g\|+\frac{c_2\nu}{\delta}  \|\Delta_k  g \|^{3/2}\|g\|^{1/2}\right]\notag\\
&\qquad\leq \frac{1}{c_1 |k|} \left(\frac{L_2}{\delta}\right)^{m}\| Hg\| \|g\|
+\tilde{c}_2\left(\left(\frac{\nu}{|k|\delta^2}\right)^2\left(\frac{L_2}{\delta}\right)^{2m} +\left(\frac{\nu}{|k|\delta}\right)^{4/3}\left(\frac{L_2}{\delta}\right)^{\frac{4m}{3}} \right)\|\Delta_k  g \|^2 +\frac14\|g\|^2\notag\\
&\qquad\leq \left(\frac{1}{c_1 |k|}\left(\frac{L_2}{\delta}\right)^{m}+\frac{\tilde{c}_2}{\nu}\left(\left(\frac{\nu}{|k|L_2^2}\right)^2\left(\frac{L_2}{\delta}\right)^{2m+4} +\left(\frac{\nu}{|k|L_2}\right)^{4/3}\left(\frac{L_2}{\delta}\right)^{\frac{4}{3}(m+1)} \right)\right)\| Hg\| \|g\|+\frac14\|g\|^2.
\end{align}
On the other hand, since $E^c$ is of size at most $N\delta$, we have
\begin{align}\label{eq:estimateEc}
\int_{E^c} |g(y)|^2\dd y\leq N\delta \|g\|^2_{L^\infty}&\leq N\delta \left(2\|g\|\|\de_yg\| +\frac{1}{L_2} \| g\|^2\right)\notag\\
&\leq N\delta \left(2\|g\|^{3/2}\| \Delta_k g\|^{1/2} +\frac{1}{L_2} \| g\|^2\right)\notag\\
&\leq  \frac{(6 N\delta)^4}{12}\| \Delta_k g\|^2 +\left(\frac{N\delta}{L_2} +\frac{1}{4} \right)\| g\|^2\notag\\
&\leq  \frac{(6 N\delta)^4}{12\nu}\|Hg\|\|g\|  +\left(\frac{N\delta}{L_2} +\frac{1}{4} \right)\| g\|^2,
\end{align}
where we  made use of \eqref{eq:repartcons} in the last inequality.
Without loss of generality, we can assume that $\delta_0$ in Assumption \ref{a:lowerbddE} is small enough so that
 $\delta \leq L_2/(4N)$ for all $\delta\in (0,\delta_0)$. Thus
we can add \eqref{eq:estimateE} and \eqref{eq:estimateEc} to conclude that
\begin{align}
\|g\|\leq 4\left(\frac{1}{c_1 |k|}\left(\frac{L_2}{\delta}\right)^{m}+\tilde{c}_2\frac{\nu}{(|k| L^2_2)^2}\left(\frac{L_2}{\delta}\right)^{2m+4}+\tilde{c}_2\frac{\nu^{1/3}}{\left(|k|L_2\right)^{4/3}}\left(\frac{L_2}{\delta}\right)^{\frac{4}{3}(m+1)} +\frac{(6 N\delta)^4}{12\nu}\right)\| Hg\|.
\end{align}
We now take $\delta$ satisfying
\begin{align}
\frac{\delta}{L_2}=c_3 \left(\frac{\nu}{|k|}\right)^\frac{1}{m+4}
\end{align}
for some sufficiently small constant $c_3>0$. We conclude that
\begin{align}
   \nu^\frac{m}{m+4}|k|^\frac{4}{m+4} = \nu^\frac{m}{m+4}|k|^\frac{4}{m+4} \|g\|\leq c_4\| Hg\|,
\end{align}
for a large enough constant $c_4>0$ independent of $\nu$ and $k$. By definition then
\[
   \Psi(H)\geq \eps'_0\, \nu^\frac{m}{m+4}|k|^\frac{4}{m+4},
\]
for some constant $\eps'_0>0$ independent of $\nu$ and $k$, whence proving the proposition.

\end{proof}

\begin{remark}
The proof of Proposition \ref{t:SemigroupEst} carries over to the slightly more general case of the 
semigroup generated by the hypoelliptic operator
\begin{equation}
  \widetilde{H}_\nu=u(y)\de_x+\nu \de_y^4,
\end{equation}
for which the same semigroup estimate \eqref{bd:sembound} holds.
\end{remark}

\bibliographystyle{plain}
\bibliography{KSmixing}

\end{document}